\newtheorem{thm}{Theorem}[section]
\newtheorem*{thm*}{Theorem}
\newtheorem*{lem*}{Lemma}
\newtheorem{cor}[thm]{Corollary}
\newtheorem{prop}[thm]{Proposition}
\theoremstyle{definition}
\newtheorem{assump}[thm]{Assumption}
\newtheorem*{case*}{Case}
\newtheorem{defn}[thm]{Definition}
\newtheorem*{defn*}{Definition}
\newtheorem{exmp}[thm]{Example}
\newtheorem*{exmp*}{Example}
\renewcommand{\thestep}{}
\theoremstyle{remark}
\newtheorem{case}{Case}\renewcommand{\thecase}{}
\newtheorem{rmk}[thm]{Remark}
\newtheorem*{rmk*}{Remark}
\def\alphenumi{
  \def\theenumi{\alph{enumi}}
  \def\p@enumi{\theenumi}
  \def\labelenumi{(\@alph\c@enumi)}}
\def\thecase{\@arabic\c@case}
\def\thestep{\@arabic\c@step}
\def\hhmm{\number\hh:\ifnum\mm<10{}0\fi\number\mm}
\let\oldmarginpar\marginpar
\renewcommand\marginpar[1]{\-\oldmarginpar[\raggedleft\footnotesize #1]%
{\raggedright\footnotesize #1}}
\newcommand\EE{\mathbb{E}}
\newcommand\HH{\mathbb{H}}
\newcommand\PP{\mathbb{P}}
\newcommand\QQ{\mathbb{Q}}
\newcommand\RR{\mathbb{R}}
\newcommand\SSS{\mathbb{S}}
\newcommand\sA{{\mathscr{A}}}
\newcommand\sB{{\mathscr{B}}}
\newcommand\sC{{\mathscr{C}}}
\newcommand\sF{{\mathscr{F}}}
\newcommand\sG{{\mathscr{G}}}
\newcommand\eps{\varepsilon}
\newcommand\less{\setminus}
\newcommand\loc{\operatorname{loc}}
\newcommand\supp{\operatorname{supp}}
\numberwithin{equation}{section}
\begin{document}

\title[Martingale problem for degenerate-parabolic partial differential operators]{On the martingale problem for degenerate-parabolic partial differential operators with unbounded coefficients and a mimicking theorem for It\^o processes}

\author[P. M. N. Feehan]{Paul M. N. Feehan}
\address[PF]{Department of Mathematics, Rutgers, The State University of New Jersey, 110 Frelinghuysen Road, Piscataway, NJ 08854-8019}
\email[PF]{feehan@math.rutgers.edu}

\author[C. Pop]{Camelia Pop}
\address[CP]{Department of Mathematics, University of Pennsylvania, 209 South 33rd Street, Philadelphia, PA 19104-6395}
\email{cpop@math.upenn.edu}

\date{August 7, 2013}

\begin{abstract}
Using results from our companion article \cite{Feehan_Pop_mimickingdegen_pde} on a Schauder approach to existence of solutions to a degenerate-parabolic partial differential equation, we solve three intertwined problems, motivated by probability theory and mathematical finance, concerning degenerate diffusion processes. We show that the martingale problem associated with a degenerate-elliptic differential operator with unbounded, locally H\"older continuous coefficients on a half-space is well-posed in the sense of Stroock and Varadhan. Second, we prove existence, uniqueness, and the strong Markov property for weak solutions to a stochastic differential equation with degenerate diffusion and unbounded coefficients with suitable H\"older continuity properties. Third, for an It\^o process with degenerate diffusion and unbounded but appropriately regular coefficients, we prove existence of a strong Markov process, unique in the sense of probability law, whose one-dimensional marginal probability distributions
match those of the given It\^o process.
\end{abstract}

%
%
%
%

\subjclass[2010]{Primary 60G44, 60J60; secondary 35K65}

\keywords{Degenerate parabolic differential operator, degenerate diffusion process, Heston stochastic volatility process, degenerate martingale problem, mathematical finance, mimicking one-dimensional marginal probability distributions, degenerate stochastic differential equation}

\thanks{PF was partially supported by NSF grant DMS-1059206. CP was partially supported by a Rutgers University fellowship. }

\maketitle
\tableofcontents

\section{Introduction}
\label{sec:Intro}
Consider a time-dependent, degenerate-elliptic differential operator defined by \emph{unbounded} coefficients $(a,b)$ on the half-space $\HH := \RR^{d-1}\times(0,\infty)$ with $d\geq 1$,
\begin{equation}
\label{eq:MartingaleGenerator}
\sA_tv(x) := \frac{1}{2}\sum_{i,j=1}^d x_da_{ij}(t,x)v_{x_ix_j}(x) + \sum_{i=1}^d b_i(t,x)v_{x_i}(x), \quad (t,x) \in [0,\infty)\times\HH,
\end{equation}
and $a=(a_{ij})$, $b=(b_i)$, and $v \in C^{2}(\overline\HH)$. The operator $\sA_t$ becomes \emph{degenerate} along the boundary $\partial\HH = \{x_d=0\}$ of the half-space. In this article, motivated by applications to probability theory and mathematical finance \cite{Antonov_Misirpashaev_Piterbarg_2009, Dupire1994, Gyongy, Piterbarg_markovprojection}, we apply the main result of our companion article\footnote{Our longer previous manuscript \cite{Feehan_Pop_mimickingdegen} combined \cite{Feehan_Pop_mimickingdegen_pde} with the present article.} \cite{Feehan_Pop_mimickingdegen_pde} to solve three intertwined problems concerning degenerate diffusion processes related to \eqref{eq:MartingaleGenerator}.

We show that the martingale problem \S \ref{subsubsec:MainMartingleProblem} for the degenerate-elliptic operator with unbounded coefficients, $\sA_t$, in \eqref{eq:MartingaleGenerator} is well-posed in the sense of D. W. Stroock and S. R. S. Varadhan \cite{Stroock_Varadhan}. In addition, as discussed in more detail in \S \ref{subsubsec:MainSDE}, we prove existence, uniqueness, and the strong Markov property for weak solutions, $\widehat X$, to a degenerate stochastic differential equation with unbounded coefficients,
\begin{equation}
\label{eq:MimickingSDE}
\begin{aligned}
d\widehat X(t) &= b(t,\widehat X(t)) dt +\sigma(t,\widehat X(t)) d\widehat W(t),\quad t \geq s,
\\
\widehat X(s) &=x.
\end{aligned}
\end{equation}
when the coefficient $\sigma$ is a square root of the coefficient matrix $x_da$ in $\sA_t$ in \eqref{eq:MartingaleGenerator}, that is, when $\sigma\sigma^* = x_da$ on $\HH_T$, where $\HH_T := (0,T) \times \HH$ is the open half-cylinder with $0<T<\infty$. Lastly, suppose we are given a degenerate It\^o process,
\begin{equation}
\label{eq:ItoProcess}
\begin{aligned}
dX(t) &= \beta(t) dt +\xi(t) d W(t),\quad t \geq 0,\\
X(0) &=x,
\end{aligned}
\end{equation}
whose possibly unbounded coefficients $(\xi,\beta)$ are related to those of \eqref{eq:MimickingSDE} as explained in \S \ref{subsubsec:MainMimickingTheorem}. When the coefficients $(\sigma,b)$ in \eqref{eq:MimickingSDE} are determined by the coefficients $(\xi,\beta)$ in \eqref{eq:ItoProcess} as described in \S \ref{subsubsec:MainMimickingTheorem}, we show that the weak solution $\widehat X$ to \eqref{eq:MimickingSDE} ``mimics'' the It\^o process \eqref{eq:ItoProcess} in the sense that $\widehat X(t)$ has the same one-dimensional marginal probability distributions as $X(t)$, for all $t \geq 0$ if $\widehat X(0) = X(0) \in \overline\HH$. Our mimicking theorem complements that of I. Gy\"ongy \cite{Gyongy}, who assumes that \eqref{eq:MimickingSDE} is non-degenerate with bounded, measurable coefficients, that of G. Brunick and S. E. Shreve \cite{BrunickThesis, Brunick_Shreve_2010}, who allow \eqref{eq:MimickingSDE} to be degenerate with unbounded, measurable coefficients, and those of A. Bentata and R. Cont \cite{Bentata_Cont_mimicking} and M. Shi and
J. Wang \cite{ShiThesis, WangThesis} who prove mimicking theorems for a discontinuous semimartingale process with a non-degenerate diffusion component and bounded coefficients.

\subsection{Summary of main results}
We describe our results outlined in the preamble to \S \ref{sec:Intro}.

\subsubsection{Existence and uniqueness of solutions to the martingale problem for a degenerate-elliptic operator with unbounded coefficients}
\label{subsubsec:MainMartingleProblem}
We define an analogue of the classical martingale problem (\cite[p.~138]{Stroock_Varadhan}, \cite[Definition 5.4.5 \& 5.4.10]{KaratzasShreve1991}) when $\RR^d$ is replaced by the closed half-space $\overline{\HH}$.

For $x,y\in\RR$, we denote $x\wedge y = \min\{x,y\}$, $x\vee y = \max\{x,y\}$, $x^+=\max\{x,0\}$, and $x^-=\min\{x,0\}$. The space $C_{\loc}([0,\infty);\overline\HH)$ of continuous functions, $u :[0,\infty)\rightarrow \overline\HH$, endowed with the topology of uniform convergence on compact sets is a complete, separable, metric space. We denote by $\sB(C_{\loc}([0,\infty);\overline\HH))$ the Borel $\sigma$-algebra induced by this topology. As in \cite[Problem 2.4.2]{KaratzasShreve1991}, we see that $\sB(C_{\loc}([0,\infty);\overline\HH))$ is also the $\sigma$-algebra generated by the cylinder sets \eqref{eq:FiltrationCylinderSets}. Following \cite[Problem 2.4.2, Equation (5.3.19) \& Remark 5.4.16]{KaratzasShreve1991}, we consider the filtration $\{\sB_t(C_{\loc}([0,\infty);\overline\HH))\}_{t\geq 0}$ given by
\begin{equation}
\label{eq:FiltrationCylinderSets}
\sB_t(C_{\loc}([0,\infty);\overline\HH)) := \varphi_t \left(\sB(C_{\loc}([0,\infty);\overline\HH))\right), \quad \forall\, t \geq 0,
\end{equation}
where $\varphi_t:C_{\loc}([0,\infty);\overline\HH)\rightarrow C_{\loc}([0,\infty);\overline\HH)$ is defined by
\[
\varphi_t(\omega) := \omega(t\wedge \cdot), \quad \forall\, \omega \in C_{\loc}([0,\infty);\overline\HH).
\]
We then have the

\begin{defn}[Solution to a martingale problem for an operator on a half-space]
\label{defn:Martingale_Problem}
Given $(s,x) \in [0,\infty)\times\overline\HH$, a probability measure $\widehat\PP^{s,x}$ on
$$
(C_{\loc}([0,\infty);\overline\HH),\mathscr{B}(C_{\loc}([0,\infty);\overline\HH))
$$
is a \emph{solution to the martingale problem associated to $\sA_t$ in \eqref{eq:MartingaleGenerator} starting from $(s,x)$} if,
for every $v\in C^{2}_0(\overline\HH)$,
$$
M^v_t(\omega) := v(\omega(t)) - v(\omega(s)) - \int_s^t \sA_u v(\omega(u))\,du,\quad t\geq s,\ \omega \in C_{\loc}([0,\infty);\overline\HH),
$$
is a continuous $\widehat\PP^{s,x}$-martingale with respect to the filtration $\sG_{t+}$, where $\sG_t$ is the augmentation (in the sense of \cite[Definition 2.7.2. (ii)]{KaratzasShreve1991}) under $\widehat\PP^{s,x}$ of the filtration $\{\sB_t(C_{\loc}([0,\infty);\overline\HH)\}_{t\geq 0}$, and $\sG_{t+}$ is its right-continuous version (in the sense of \cite[p. 89]{KaratzasShreve1991}), and $\widehat \PP^{s,x}$ obeys the initial condition,
\begin{equation}
\label{eq:Initial_Condition_Martingale_Problem}
\widehat \PP^{s,x}\left(\omega \in C_{\loc}([0,\infty); \overline\HH): \omega(t)=x, 0\leq t\leq s\right)=1.
\end{equation}
\end{defn}

We note that $\sG_{t+}$ in Definition \ref{defn:Martingale_Problem} satisfies the usual conditions \cite[Definition 1.2.25]{KaratzasShreve1991}.

\begin{rmk}[Reduction to usual filtration]
\label{rmk:Reduction_to_usual_filtration}
\cite[Remark 5.4.16]{KaratzasShreve1991}
By modifying the statement and solution to \cite[Problem 5.4.13]{KaratzasShreve1991} (that is, replacing $\RR^d$ by $\HH$), we see that if $M^v_t$ is a martingale with respect to the filtration $\{\sB_t(C_{\loc}([0,\infty);\overline\HH)\}_{t\geq 0}$, then it is a martingale with respect to the enlarged filtration $\sG_{t+}$ in Definition \ref{defn:Martingale_Problem}.
\end{rmk}

\begin{thm}[Existence and uniqueness of solutions to the martingale problem for a degenerate-elliptic operator with unbounded coefficients]
\label{thm:MainExistenceUniquenessMartProb}
Suppose the coefficients $(a, b)$ in \eqref{eq:MartingaleGenerator} obey the conditions in Assumption \ref{assump:Coeff}. Then, for any $(s,x)\in[0,\infty)\times\overline{\HH}$, there is a unique solution, $\widehat \PP^{s,x}$, to the martingale problem  associated to $\sA_t$ in \eqref{eq:MartingaleGenerator} starting from $(s,x)$.
\end{thm}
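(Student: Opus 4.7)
The plan is to adapt the classical Stroock--Varadhan programme to our degenerate half-space setting, using the Cauchy problem theory from the companion paper \cite{Feehan_Pop_mimickingdegen_pde} as the key analytic input. The argument splits into uniqueness and existence, and the PDE results from \cite{Feehan_Pop_mimickingdegen_pde} are designed precisely to supply what is needed for each half.

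For uniqueness, I will exploit the standard duality between the martingale problem and the parabolic Cauchy problem. Given any solution $\widehat\PP^{s,x}$ and test data $f$, $g$ in a sufficiently rich H\"older class on $\overline{\HH}$ (for instance, functions in $C^{2+\alpha}_0(\overline\HH)$ arising as $g = v$ and $f = -\sA_u v$ with $v \in C^2_0(\overline\HH)$), the companion paper yields a unique classical solution $u$ to the backward Cauchy problem
\[
-u_t + \sA_t u = -f \text{ on } [s,T)\times\HH, \qquad u(T,\cdot) = g \text{ on } \overline\HH,
\]
with the appropriate weighted Schauder regularity up to $\{x_d = 0\}$; crucially, the degeneracy absorbs the Dirichlet condition along $\partial\HH$, so only a terminal condition is imposed. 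Applying the martingale identity in Definition \ref{defn:Martingale_Problem} to $u(t,\omega(t))$, together with the optional sampling theorem and a localization argument to handle the unboundedness of $u$ and its derivatives, yields
\[
u(s,x) = \EE^{\widehat\PP^{s,x}}\!\!\left[g(\omega(T)) + \int_s^T f(r,\omega(r))\,dr\right].
\]
Since $g$ is arbitrary in a measure-determining class, this pins down the one-dimensional marginal of $\widehat\PP^{s,x}$ at time $T$. A standard conditioning-and-shifting argument (the regular conditional distribution of $\widehat\PP^{s,x}$ given $\sG_t$ almost surely solves the shifted martingale problem from $(t,\omega(t))$) then upgrades this to equality of all finite-dimensional distributions, hence of the measure on path space.

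For existence, I will construct approximate solutions by non-degenerating and smoothing the coefficients. Replace $x_d$ in \eqref{eq:MartingaleGenerator} by $x_d + \eps$ and mollify and truncate $(a,b)$ on expanding balls to obtain bounded, smooth, uniformly elliptic coefficients $(a^\eps,b^\eps)$ on $\RR^d$; the classical Stroock--Varadhan theory furnishes solutions $\widehat\PP^{s,x}_\eps$ on $C_{\loc}([0,\infty);\RR^d)$. The remaining steps are (i) uniform moment and modulus-of-continuity estimates to deduce tightness, (ii) verification that any subsequential limit is supported on paths in $\overline\HH$, and (iii) passage to the limit in the martingale identity against $v\in C^2_0(\overline\HH)$. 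Step (i) uses that the structural hypotheses in Assumption \ref{assump:Coeff} control the at-most-linear growth of $(a,b)$ and provide an inward-pointing sign condition on $b_d$ near $\{x_d=0\}$, yielding uniform $L^p$ bounds on $\sup_{s\leq t\leq T}|\omega(t)|$ and Kolmogorov-type moment estimates on increments. Step (ii) follows by applying It\^o's formula to a suitable penalty function of $(x_d)^-$ under $\widehat\PP^{s,x}_\eps$ and letting $\eps\downarrow 0$. Step (iii) is then standard dominated convergence once the uniform moment bounds are in hand.

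The main obstacle is that the coefficients are \emph{unbounded} and the operator \emph{degenerates along a codimension-one boundary}, which simultaneously breaks the conventional Schauder/uniqueness route and forces careful control of path behavior near $\partial\HH$. Both difficulties are resolved analytically in \cite{Feehan_Pop_mimickingdegen_pde}; the work here is to translate those estimates into probabilistic tightness and moment controls, and to verify that the localization required to apply the PDE solution $u$ inside the martingale identity is permitted by the growth bounds from Assumption \ref{assump:Coeff}.
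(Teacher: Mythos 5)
Your uniqueness argument follows the paper's route: duality against the terminal/backward Cauchy problem solved in \cite{Feehan_Pop_mimickingdegen_pde} to pin down the one-dimensional marginals (Proposition \ref{prop:MarginalsUniqueness}), then the Stroock--Varadhan conditioning-and-shifting upgrade to uniqueness in law. The paper implements that last step by adjoining time as a $(d{+}1)$-st coordinate and extending the coefficients by zero off $[0,\infty)\times\overline\HH$ so that the time-\emph{homogeneous} result \cite[Proposition 5.4.27]{KaratzasShreve1991} applies (Corollary \ref{cor:UniqueSolutionMartingaleProblem}); your ``standard conditioning argument'' is the same idea, just without the time-homogenization bookkeeping. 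For existence your route differs: you re-derive existence from scratch by $\eps$-non-degeneration, mollification, tightness, and passage to the limit in the martingale identity, whereas the paper takes a continuous square root $\varsigma$ of $a$ via \cite[Lemma 6.1.1]{FriedmanSDE}, sets $\sigma=\sqrt{x_d}\,\varsigma$, extends $(\sigma,b)$ continuously to $\RR^d$ with $\tilde\sigma=0$ and $\tilde b_d\ge 0$ on $\{x_d<0\}$, cites \cite[Theorem 5.3.10]{Ethier_Kurtz} directly, and then uses the support lemma (Proposition \ref{prop:LawSupport}, essentially your penalty-function step (ii)) to confine the paths to $\overline\HH$. Your approach buys self-containedness at the cost of redoing the compactness argument that \cite{Ethier_Kurtz} already packages; the paper's is shorter but leans on the half-space extension trick.

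One point in your uniqueness half is genuinely underspecified and is where the real work lies. The solution $u$ of the Cauchy problem lies in $\sC^{2+\alpha}(\overline\HH_T)$, so only $x_d u_{x_ix_j}$ extends continuously to $\{x_d=0\}$: the second derivatives themselves may blow up at the boundary, and moreover the martingale property in Definition \ref{defn:Martingale_Problem} is only postulated for \emph{time-independent, compactly supported} $v\in C^2_0(\overline\HH)$. You cannot simply ``apply the martingale identity to $u(t,\omega(t))$'' and localize; you must first pass to a weak-solution representation of $\widehat\PP^{s,x}$ (via \cite[Proposition 5.4.6 \& Corollary 5.4.8]{KaratzasShreve1991}) and then prove an It\^o formula valid for functions satisfying only \eqref{eq:CondDerivatives}. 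The paper does this in Proposition \ref{prop:ItoLemma} by applying the classical It\^o formula to the shifted process $\widehat X^\eps$ of \eqref{eq:Shifted_process}, exploiting that $\widehat X_d a_{ij}(\cdot,\widehat X)\,v_{x_ix_j}(\cdot,\widehat X^\eps)$ stays bounded because the weight $x_d$ in $\sA_t$ exactly matches the weight in the H\"older space, and passing to the limit $\eps\downarrow 0$ in probability. Your localization handles the unboundedness of the coefficients but not this boundary-regularity obstruction; without it the chain from the PDE solution to the identity $u(s,x)=\EE^{\widehat\PP^{s,x}}[g(\omega(T))+\int_s^T f]$ is incomplete.
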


When the initial condition $(s,x)$ is clear from the context, we write $\widehat \PP$ instead of $\widehat \PP^{s,x}$. For brevity, when the initial condition is $(0,x)$, we sometimes write $\widehat \PP^{x}$ instead of $\widehat \PP^{0,x}$.

\begin{rmk}[Well-posedness of the classical martingale problem in \cite{Stroock_Varadhan})]
Standard results which ensure \emph{existence} of solutions to the classical martingale problem associated with $\sA_t$
\begin{equation}
\label{eq:ClassicalMartingaleProblemGenerator}
\sA_tv(t,x) := \frac{1}{2} \sum_{i,j=1}^d \alpha_{ij}(t,x) v_{x_ix_j}(x) + \sum_{i=1}^d b_i(t, x)v_{x_i}(x),
\quad (t,x) \in [0,\infty)\times\RR^d,
\end{equation}
require that the coefficients
\begin{equation}
\label{eq:ClassicalMartingaleProblemCoeff}
\begin{aligned}
\alpha :[0,\infty)\times\RR^d &\rightarrow \SSS^d,
\\
b: [0,\infty)\times\RR^d &\rightarrow \RR^d.
\end{aligned}
\end{equation}
be bounded and continuous \cite[Theorem 5.4.22]{KaratzasShreve1991}, \cite[Theorem 6.1.7]{Stroock_Varadhan}; here, $\SSS^d \subset \RR^{d\times d}$ denotes the closed, convex subset of \emph{non-negative} definite, symmetric matrices and $\alpha = (\alpha_{ij})$, $b = (b_i)$. Standard results which ensure \emph{uniqueness} of solutions require, in addition, that the coefficients $(\alpha, b)$ are H\"older continuous and that the matrix $\alpha$ is uniformly elliptic (see \cite[Theorem 5.4.28, Corollary 5.4.29, and Remark 5.4.29]{KaratzasShreve1991} for the time-homogeneous martingale problem). Strict ellipticity of the second-order coefficients matrix is required for the uniqueness of the martingale problem to hold, as in \cite[Theorem 7.2.1]{Stroock_Varadhan}.
\end{rmk}

\begin{rmk}[Approaches to proving uniqueness in the classical martingale problem]
Strong \emph{uniqueness} of solutions to stochastic differential equations as \eqref{eq:MimickingSDE} is guaranteed when the coefficients, $b(t,x)$ and $\sigma(t,x)$, are locally Lipschitz continuous in the spatial variable \cite[Theorem 5.2.5]{KaratzasShreve1991}. We also recall the result of Yamada and Watanabe that pathwise uniqueness of weak solutions implies uniqueness in the sense of probability law \cite[Proposition 5.3.20]{KaratzasShreve1991}. Our article is closer in spirit to a third approach to proving uniqueness of solutions to the classical martingale problem \cite[\S 5.4]{KaratzasShreve1991} which consists in   proving \emph{existence} of solutions in $C([0,T]\times\RR^d)\cap C^{2}((0,T)\times\RR^d)$ to the terminal value problem for the parabolic partial differential equation,
\begin{equation*}
\begin{cases}
u_t + \sA_t u = 0 & \hbox{ on } (0,T)\times\RR^d,
\\
u(T,\cdot) = g &\hbox{ on } \RR^d,
\end{cases}
\end{equation*}
where $g \in C^{\infty}_0(\RR^d)$ and $\sA_t$ is given by \eqref{eq:ClassicalMartingaleProblemGenerator}. Here, $C^{2}((0,T)\times\RR^d)$ denotes the space of continuous functions, $u$, such that $u_t$, $u_{x_i}$ and $u_{x_ix_j}$ are also continuous on $(0,T)\times\RR^d$.
\end{rmk}

\begin{rmk}[Comments on uniqueness]
While \cite[Remark 5.4.31]{KaratzasShreve1991} might appear to provide a simple solution to the uniqueness property asserted by Theorem \ref{thm:MainExistenceUniquenessMartProb} when the nonnegative definite matrix-valued function $x_da$ is in $C^2(\HH;\SSS^d)$, that is not the case. Although we might extend the coefficient, $x_da$, as a nonnegative definite matrix-valued function $x_d^+a$ or $|x_d|a$ in $C^{0,1}(\RR^d;\SSS^d)$, such extensions are not in $C^2(\RR^d;\SSS^d)$, as required by \cite[Remark 5.4.31]{KaratzasShreve1991}.
\end{rmk}

\begin{rmk}[Comments on the regularity of the coefficient matrix, $a$]
Our proof of Theorem \ref{thm:MainExistenceUniquenessMartProb} involves an appeal to \cite[Lemma 6.1.1]{FriedmanSDE} to find a square root, $\varsigma \in C_{\loc}([0,\infty)\times\overline\HH;\RR^{d\times d})$, such that $\varsigma\varsigma^*=a$ on $[0,\infty)\times\overline\HH$. That appeal is valid since $a\in C_{\loc}([0,\infty)\times\overline\HH;\SSS^d)$ and is strictly elliptic on $[0,\infty)\times\overline\HH$. More generally, if $a$ is $C^{m,\alpha}$ (respectively, $C^m$), for an integer $m\geq 0$ and $\alpha\in (0,1]$, then one may choose a square root, $\varsigma$, which is also $C^{m,\alpha}$ (respectively, $C^m$). If the matrix, $a(t,x)$, is merely non-negative for all $(t,x) \in [0,\infty)\times\overline\HH$ but $a$ is $C^2$ on $[0,\infty)\times\overline\HH$, then one may choose a square root, $\varsigma$, which is Lipschitz on $[0,\infty)\times\overline\HH$, according to \cite[Theorem 6.1.2]{FriedmanSDE}.
\end{rmk}

\subsubsection{Existence and uniqueness of weak solutions to a degenerate stochastic differential equation with unbounded coefficients}
\label{subsubsec:MainSDE}
We start by describing the conditions imposed on the coefficient functions $(\sigma,b)$ which define a degenerate stochastic differential equation \eqref{eq:MimickingSDE}.

\begin{assump}[Properties of the coefficients of the stochastic differential equation]
\label{assump:MimickingCoeffSpecialForm}
The coefficient functions $(\sigma, b)$ in \eqref{eq:MimickingSDE} obey the following conditions.
\begin{enumerate}
\item There is a function $\varsigma\in C_{\loc}([0,\infty)\times\overline\HH;\RR^{d\times d})$ such that
\begin{equation}
\label{eq:FormOfDiffusionMatrix}
\sigma(t,x) = \sqrt{x_d} \varsigma(t,x),\quad\forall\, (t,x)\in [0,\infty)\times\overline\HH.
\end{equation}
\item If we define $a: [0,\infty)\times\overline\HH \to \SSS^d$ by
\begin{equation}
\label{eq:Definition_a}
a(t,x) := \varsigma(t,x)\varsigma^*(t,x), \quad \forall\, (t,x)\in [0,\infty)\times\overline\HH,
\end{equation}
then the coefficient functions $(a, b)$ obey the conditions in Assumption \ref{assump:Coeff}.
\end{enumerate}
\end{assump}

The constraints on the coefficients $(\sigma,b)$ implied by Assumption \ref{assump:MimickingCoeffSpecialForm} are mild enough that they include many examples of interest in mathematical finance.

\begin{exmp}[Heston stochastic differential equation]
\label{exmp:HestonSDE}
The conditions in Assumption \ref{assump:MimickingCoeffSpecialForm} are obeyed by the coefficients of the $\RR^2$-valued log-Heston process \cite{Heston1993} with killing,
\begin{equation}
\label{eq:Heston_Process}
\begin{aligned}
dX_1(t) &= \left(r-q-\frac{1}{2}X_2(t)\right) dt + \sqrt{X_2(t)} dW_1(t),
\\
dX_2(t) &= \kappa(\theta-X_2(t)) dt + \zeta \sqrt{X_2(t)}\left(\varrho dW_1(t)+\sqrt{1-\varrho^2}dW_2(t)\right),
\end{aligned}
\end{equation}
where $q\in\RR$, $r\geq 0$, $\kappa>0$, $\theta>0$, $\zeta\neq 0$, and $\varrho\in (-1,1)$ are constants.
\end{exmp}

\begin{exmp}[Parabolic Heston partial differential equation]
\label{exmp:HestonPDE}
The conditions in Assumption \ref{assump:Coeff} are obeyed by the coefficients of the parabolic Heston partial differential operator,
\begin{equation}
\label{eq:HestonPDE}
-Lu = -u_t + \frac{x_2}{2}\left(u_{x_1x_2} + 2\varrho\zeta u_{x_1x_2} + \zeta^2 u_{x_2x_2}\right) + \left(r-q-\frac{x_2}{2}\right)u_{x_1} + \kappa(\theta-x_2)u_{x_2} - ru,
\end{equation}
where the coefficients are as in Example \ref{exmp:HestonSDE}.
\end{exmp}

The following theorem does not follow by the classical results \cite[Theorem 5.3.3, Theorem 4.4.2 and Corollary 4.4.3]{Ethier_Kurtz} because our operator is not time-homogeneous as is required in the hypotheses of the cited results.

\begin{thm}[Existence, uniqueness, and strong Markov property of weak solutions to a degenerate stochastic differential equation with unbounded coefficients]
\label{thm:MainWeakExistenceUniquenessSDE}
Suppose that the coefficients $(\sigma, b)$ in \eqref{eq:MimickingSDE} obey the conditions in Assumption \ref{assump:MimickingCoeffSpecialForm}. Let $(s,x)\in [0,\infty)\times\overline{\HH}$. Then,
\begin{enumerate}
\item There is a weak solution, $(\widehat X, \widehat W)$, $(\Omega, \sF, \PP)$, $\{\sF_t\}_{t \geq s}$, to the stochastic differential equation \eqref{eq:MimickingSDE} such that $\widehat X(s)=x$, $\PP$-a.s.
\item The weak solution is unique in the sense of probability law, that is, if
$$
(\widehat X^i, \widehat W^i), (\Omega^i, \sF^i, \PP^i), (\sF^i_t)_{t \geq s}, \quad i=1,2,
$$
are two weak solutions to the stochastic differential equation \eqref{eq:MimickingSDE} started at $x$ at time $s$, then the two processes $X^1$ and $X^2$ have the same law.
\item The unique weak solution, $(\widehat X, \widehat W)$, $(\Omega, \sF, \PP)$, $\{\sF_t\}_{t \geq s}$, has the strong Markov property.
\end{enumerate}
\end{thm}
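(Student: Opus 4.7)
The plan is to exploit the Stroock--Varadhan equivalence between well-posedness of the martingale problem of Theorem \ref{thm:MainExistenceUniquenessMartProb} and weak existence and uniqueness in law for \eqref{eq:MimickingSDE}, together with the classical regular-conditional-probability argument for the strong Markov property; Assumption \ref{assump:MimickingCoeffSpecialForm} ensures that the coefficients match via $\sigma \sigma^{*} = x_d\, a$. For existence, let $\widehat{\PP}^{s,x}$ be the unique solution to the martingale problem supplied by Theorem \ref{thm:MainExistenceUniquenessMartProb} and let $\widehat{X}(t)(\omega) := \omega(t)$ be the coordinate process on $C_{\loc}([0,\infty); \overline{\HH})$ endowed with the filtration $\sG_{t+}$ of Definition \ref{defn:Martingale_Problem}. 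Applying Definition \ref{defn:Martingale_Problem} to sequences of $C^{2}_0(\overline{\HH})$ functions agreeing with $x_i$ and $x_i x_j$ on arbitrarily large compact subsets of $\overline{\HH}$ and localizing at exit times from nested compacts, I would deduce that
$$
M_i(t) := \widehat{X}_i(t) - \widehat{X}_i(s) - \int_s^t b_i(u, \widehat{X}(u))\,du, \qquad t \geq s,
$$
is a continuous local $\widehat{\PP}^{s,x}$-martingale with quadratic cross-variation $\langle M_i, M_j\rangle_t = \int_s^t \widehat{X}_d(u)\, a_{ij}(u, \widehat{X}(u))\,du = \int_s^t (\sigma\sigma^{*})_{ij}(u, \widehat{X}(u))\,du$. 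By the martingale representation theorem in the possibly degenerate setting \cite[Problem 5.4.2]{KaratzasShreve1991}, after enlarging the probability space if necessary there exists a $d$-dimensional Brownian motion $\widehat{W}$ such that $M(t) = \int_s^t \sigma(u, \widehat{X}(u))\,d\widehat{W}(u)$; combined with \eqref{eq:Initial_Condition_Martingale_Problem}, this produces a weak solution to \eqref{eq:MimickingSDE} with $\widehat{X}(s) = x$ almost surely.

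For uniqueness in law, given any two weak solutions $(\widehat{X}^i, \widehat{W}^i)$, $i = 1, 2$, I would apply It\^o's formula to $v(\widehat{X}^i(t))$ for arbitrary $v \in C^{2}_0(\overline{\HH})$, use $\sigma\sigma^{*} = x_d a$ to identify the second-order term with the generator \eqref{eq:MartingaleGenerator}, and verify that the pushforward law $\PP^i \circ (\widehat{X}^i)^{-1}$ on $C_{\loc}([0,\infty); \overline{\HH})$ is a solution to the martingale problem of Definition \ref{defn:Martingale_Problem} starting from $(s, x)$. The uniqueness half of Theorem \ref{thm:MainExistenceUniquenessMartProb} then forces $\PP^1 \circ (\widehat{X}^1)^{-1} = \widehat{\PP}^{s,x} = \PP^2 \circ (\widehat{X}^2)^{-1}$. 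The strong Markov property of the unique weak solution then follows from this uniqueness by the classical regular-conditional-probability argument of Stroock--Varadhan: for any $\{\sG_{t+}\}$-stopping time $\tau$, the conditional distribution of the shifted trajectory $\omega(\tau + \cdot)$ given $\sG_{\tau+}$ is, $\widehat{\PP}^{s,x}$-almost surely, a solution to the martingale problem for $\sA_t$ starting from $(\tau(\omega), \omega(\tau(\omega)))$, and uniqueness identifies it with $\widehat{\PP}^{\tau(\omega), \omega(\tau(\omega))}$; see \cite[Theorem 5.4.20]{KaratzasShreve1991} or \cite[Theorem 6.2.2]{Stroock_Varadhan}.

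The main obstacle I anticipate is the Brownian motion construction in the existence step. Because $\sigma(t, x) = \sqrt{x_d}\,\varsigma(t, x)$ degenerates along $\partial\HH = \{x_d = 0\}$, one cannot literally recover $\widehat{W}$ by inverting $\sigma$ where $\widehat{X}_d$ vanishes; the standard resolution is to adjoin an auxiliary independent $d$-dimensional Brownian motion on an enlarged probability space and define $\widehat{W}$ via a Moore--Penrose pseudoinverse of $\sigma$ on the range of $\sigma$, supplemented by the auxiliary motion on its orthogonal complement, so that $\int_s^{\cdot} \sigma(u, \widehat{X}(u))\,d\widehat{W}(u)$ recovers $M$ almost surely. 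The remaining measurability and continuity checks needed to splice the enlarged filtration into the framework of Definition \ref{defn:Martingale_Problem} are routine but must be executed carefully because $\sG_{t+}$ is the augmentation of the natural filtration under $\widehat{\PP}^{s,x}$; once this is done, Theorem \ref{thm:MainExistenceUniquenessMartProb} drives the remainder of the proof.
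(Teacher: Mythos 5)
Your proposal is correct in substance, but it routes the three assertions through the martingale problem in a direction partly opposite to the paper's. For \emph{uniqueness} you and the paper do the same thing: push the law of a weak solution forward to $C_{\loc}([0,\infty);\overline\HH)$, check it solves the martingale problem of Definition \ref{defn:Martingale_Problem}, and invoke the uniqueness half of Theorem \ref{thm:MainExistenceUniquenessMartProb}. For \emph{existence}, however, the paper does not pass through the martingale problem at all: it extends $(\sigma,b)$ continuously to $[0,\infty)\times\RR^d$ so that \eqref{eq:LawSupportDiffusionZero}--\eqref{eq:FormOfb_d} hold, applies the Ethier--Kurtz existence theorem for SDEs with continuous coefficients of linear growth, and then uses Proposition \ref{prop:LawSupport} to show the solution stays in $\overline\HH$ (this is Theorem \ref{thm:WeakExistence_SDE}; the paper's existence of martingale-problem solutions, Theorem \ref{thm:WeakExistence_martingale}, is \emph{deduced from} this, not the other way around). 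Your route --- start from $\widehat\PP^{s,x}$, extract the local martingales $M_i$ with cross-variations $\int (\sigma\sigma^*)_{ij}\,du$, and manufacture $\widehat W$ on an enlarged space via a pseudoinverse of $\sigma$ --- is the standard converse direction of the dictionary (essentially \cite[Proposition 5.4.6]{KaratzasShreve1991}, which the paper itself invokes elsewhere), and the degeneracy issue you flag is handled exactly as you describe; so this is legitimate, just less direct than the paper's, and it buys nothing extra here since Theorem \ref{thm:MainExistenceUniquenessMartProb}'s existence half was itself obtained from the SDE side.

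One point where your sketch is thinner than it should be is the strong Markov property. The references you cite are stated for the \emph{time-homogeneous} martingale problem on state space $\RR^d$ (\cite[Theorem 5.4.20]{KaratzasShreve1991} in particular), whereas the problem here is time-inhomogeneous and posed on the closed half-space $\overline\HH$. The paper's proof deals with this explicitly: it adjoins time as a zeroth coordinate, extends the coefficients to $\RR^{d+1}$ as in the proof of Corollary \ref{cor:UniqueSolutionMartingaleProblem}, verifies well-posedness of the resulting time-homogeneous classical martingale problem, applies \cite[Theorem 5.4.20]{KaratzasShreve1991} to the augmented process $Y$, and then translates the conclusion \eqref{eq:StrongMarkovY} back to $\widehat X$. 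If you want to run the regular-conditional-probability argument directly in the time-inhomogeneous half-space setting instead, you must note that it requires well-posedness from \emph{every} starting point $(\tau(\omega),\omega(\tau(\omega)))$ together with measurability of $(s,x)\mapsto\widehat\PP^{s,x}$, neither of which is literally covered by the theorems as cited; the homogenization detour is the cleanest way to discharge this, and you should either carry it out or adapt the Stroock--Varadhan argument explicitly.
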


When the initial condition $(s,x)$ is not clear from the context, we write $X^{s,x}$ instead of $X$. For brevity, when the initial condition is $(0,x)$, we sometimes write $X^x$ instead of $X^{0,x}$ or $X$.

\begin{rmk}[Non-exploding solutions]
In the one-dimensional case, \cite[Remark 5.5.19]{KaratzasShreve1991} can be applied to show that solutions to \eqref{eq:MimickingSDE} are non-exploding; \cite[Theorem 5.3.10]{Ethier_Kurtz} may also be applied to give this conclusion, noting that the moments of order $2m$ ($m\geq 1$) are bounded via \eqref{eq:ItoLemma6}.
\end{rmk}

\subsubsection{Mimicking one-dimensional marginal probability distributions of a degenerate It\^o process with unbounded coefficients}
\label{subsubsec:MainMimickingTheorem}
Let $X$ be an $\RR^d$-valued It\^o process as in \eqref{eq:ItoProcess}, where $W$ is an $\RR^r$-valued Brownian motion on a filtered probability space, $(\Omega, \sF, \PP, \{\sF_t\}_{t\geq 0})$, satisfying the usual conditions \cite[Definition 1.2.25]{KaratzasShreve1991}, $\beta$ is an $\RR^d$-valued, adapted process, and $\xi$ is a $\RR^{d\times r}$-valued, adapted process satisfying the integrability condition,
\begin{equation}
\label{eq:IntegrabilityCondition}
\EE\left[\int_0^t\left(|\beta(s)|+|\xi(s)\xi^*(s)|\right) \,ds\right] < \infty, \quad \forall\, t \geq 0.
\end{equation}
We assume that $X(0)$ is non-random, with
\begin{equation}
\label{eq:Initial_cond_Ito_process}
X(0)=x_0\in\overline{\HH},
\end{equation}
and that for all $ t\geq 0$ we have
\begin{equation}
\label{eq:SupportItoProcess}
X(t) \in \overline\HH, \quad \PP\hbox{-a.s.},
\end{equation}
that is, $\overline\HH$ is a state space for the process, $X(t)$.

We can now state the main result of this article which is the analogue of \cite[Theorem 4.6]{Gyongy} for but for an It\^o process \eqref{eq:ItoProcess} with degenerate, unbounded diffusion coefficient, $\xi(t)$, and possibly unbounded drift coefficient, $\beta(t)$.

\begin{thm}[Mimicking theorem for degenerate It\^o processes with unbounded coefficients]
\label{thm:MainMarginalsMatching}
Let $X(t)$ be an It\^o process as in \eqref{eq:ItoProcess}, with coefficients $\xi(t)$ and $\beta(t)$, and which obeys \eqref{eq:SupportItoProcess}. We define deterministic functions, measurable with respect to the Lebesque measure  on $[0,\infty)\times\HH$, by
\begin{align}
\label{eq:DefinitionMimickingCoeff_b}
b(t, x) &:= \EE\left[\beta(t)|X(t)=x\right],
\\
\label{eq:DefinitionMimickingCoeff_a}
x_d a(t, x) &:= \EE\left[\xi(t)\xi^*(t)|X(t)=x\right],
\end{align}
and assume that the coefficient functions, $(a,b)$, satisfy Assumption \ref{assump:Coeff}. If $\sigma$ is a coefficient function obeying \eqref{eq:FormOfDiffusionMatrix} and \eqref{eq:Definition_a}, then the unique weak solution, $\widehat X$, to the stochastic differential equation \eqref{eq:MimickingSDE}, with initial condition $\widehat X(0)=X(0)=x_0\in \bar\HH$, given by Theorem \ref{thm:MainWeakExistenceUniquenessSDE}, has the same one-dimensional marginal distributions as $X$.
\end{thm}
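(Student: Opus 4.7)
The strategy is to use solutions of the backward Cauchy problem for $\sA_t$ as test functions and read off the equality of expectations. Specifically, for each $T>0$ and each $g$ in a sufficiently rich test class on $\overline\HH$, the Schauder-type solvability from the companion paper \cite{Feehan_Pop_mimickingdegen_pde} produces a classical solution $u$ on $[0,T]\times\overline\HH$ to
\begin{equation*}
u_t + \sA_t u = 0 \text{ on } [0,T)\times\HH, \qquad u(T,\cdot)=g \text{ on } \overline\HH,
\end{equation*}
with enough regularity up to the degenerate boundary $\{x_d=0\}$ to apply It\^o's formula. The plan is then to apply It\^o to $u(t,X(t))$ and to $u(t,\widehat X(t))$, killing the drift term via the PDE, and to conclude $\EE[g(X(T))] = u(0,x_0) = \EE[g(\widehat X(T))]$.

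For $\widehat X$, It\^o's formula along \eqref{eq:MimickingSDE} combined with the PDE shows that $u(t,\widehat X(t))$ is a local martingale, and the growth estimates behind Theorem \ref{thm:MainWeakExistenceUniquenessSDE} upgrade this to a true martingale, yielding $\EE[g(\widehat X(T))]=u(0,x_0)$. For $X$, It\^o's formula applied to $u(t,X(t))$ along \eqref{eq:ItoProcess} produces
\begin{equation*}
u(T,X(T)) = u(0,x_0) + \int_0^T\!\bigl[u_t + \nabla u\cdot\beta + \tfrac{1}{2}\tr(\xi\xi^*\nabla^2 u)\bigr](s,X(s))\,ds + (\text{local martingale}).
\end{equation*}
After taking expectations (via a standard localization using \eqref{eq:IntegrabilityCondition}), the tower property combined with the defining identities \eqref{eq:DefinitionMimickingCoeff_b}--\eqref{eq:DefinitionMimickingCoeff_a} converts the conditional expectation of the integrand into $(u_t + \sA_s u)(s,X(s))$, which vanishes by the PDE. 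Hence $\EE[g(X(T))]=u(0,x_0)$, matching the identity for $\widehat X$. Since $g$ ranges over a class separating probability measures on $\overline\HH$ and $T>0$ is arbitrary, the one-dimensional marginals of $X$ and $\widehat X$ coincide.

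The principal obstacle is the regularity of $u$ up to the degenerate boundary $\{x_d=0\}$: for It\^o's formula to make sense when $X$ or $\widehat X$ touches this boundary, one needs $u$ to be $C^{1,2}$ on $[0,T]\times\overline\HH$ in a weighted sense appropriate to the degeneration, with the second-order derivatives integrable against the weight $x_d a$. The Schauder bounds of \cite{Feehan_Pop_mimickingdegen_pde} are calibrated to exactly this boundary behavior, so verification should reduce to localization combined with the a~priori estimates therein. A secondary technical issue is to select the test class $\{g\}$ both rich enough to separate probability measures on $\overline\HH$ and amenable to the H\"older--Schauder framework of Assumption \ref{assump:Coeff}. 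An alternative route that I would keep in reserve, in the spirit of Gy\"ongy \cite{Gyongy} and Brunick--Shreve \cite{BrunickThesis,Brunick_Shreve_2010}, is to extract directly from the It\^o computation that the marginals $\mu_t := \mathrm{Law}(X(t))$ solve the forward Kolmogorov equation weakly with initial datum $\delta_{x_0}$ and then invoke a Markov-projection construction, with uniqueness forced by the well-posedness of the martingale problem in Theorem \ref{thm:MainExistenceUniquenessMartProb}.
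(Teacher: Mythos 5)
Your proposal is correct and follows essentially the same duality argument as the paper: solve the terminal value problem $v_t+\sA_t v=0$, $v(T,\cdot)=g$ via Theorem \ref{thm:MainExistenceUniquenessPDE}, obtain $\EE[g(\widehat X(T))]=v(0,x_0)$ from Proposition \ref{prop:ItoLemma} (the boundary-adapted It\^o formula), and for $X$ use It\^o plus the tower property with \eqref{eq:DefinitionMimickingCoeff_b}--\eqref{eq:DefinitionMimickingCoeff_a} to kill the drift. The boundary-regularity obstacle you flag is resolved in the paper exactly as you anticipate, by applying the standard It\^o formula to the shifted process $X^{\eps}$ of \eqref{eq:Shifted_process} and letting $\eps\downarrow 0$, using that $v_t$, $v_{x_i}$, and $x_d v_{x_ix_j}$ extend continuously to $\{x_d=0\}$.
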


\begin{rmk}[Mimicking stochastic differential equation]
We call \eqref{eq:MimickingSDE} the \emph{mimicking stochastic differential equation} defined by the It\^o process \eqref{eq:ItoProcess} when its coefficients $(\sigma, b)$ are as in the statement of Theorem \ref{thm:MainMarginalsMatching}.
\end{rmk}

\subsection{Connections with previous research on martingale and mimicking problems}
We briefly survey previous work on uniqueness of solutions to the martingale problem for degenerate differential operators, uniqueness and the strong Markov property for solutions to degenerate stochastic differential equations, and mimicking problems.

\subsubsection{Mimicking theorems}
Gy\"ongy \cite[Theorem 4.6]{Gyongy} proves existence of a mimicking process as in Theorem \ref{thm:MainMarginalsMatching} --- although not the uniqueness or strong Markov properties --- with conditions on the coefficients $(\sigma,b)$ which are both partly \emph{weaker} than those of Theorem \ref{thm:MainMarginalsMatching}, because the functions $b:[0,\infty)\times\RR^d\to\RR^d$ and $\sigma:[0,\infty)\times\RR^d\to\RR^{d\times d}$ are only required to be Borel-measurable, but also partly \emph{stronger} than those of Theorem \ref{thm:MainMarginalsMatching}, because the functions $(\sigma,b)$ are required to be uniformly bounded on $[0,\infty)\times\RR^d$ and $\sigma\sigma^*$ is required to be uniformly positive definite on $[0,\infty)\times\RR^d$. Since Gy\"ongy only requires that the coefficients $(\sigma, b)$ of the corresponding mimicking stochastic differential equation \eqref{eq:MimickingSDE} are Borel measurable functions, he uses an auxiliary regularizing procedure to construct a weak solution $\widehat X$ to \eqref{eq:MimickingSDE}. Uniqueness of the weak solution is not proved under the hypotheses of \cite[Theorem 4.6]{Gyongy} and the main obstacle here is the lack of regularity of the coefficients
$(\sigma, b)$.

The hypotheses of \cite[Theorem 4.6]{Gyongy} are quite restrictive, as we can see that they would exclude a process, $X$, such as that in Example \ref{exmp:HestonSDE}, even though the coefficients of its mimicking processes, $\widehat X$, can be found by explicit calculation \cite{Atlan_2006} (see also \cite{Antonov_Misirpashaev_Piterbarg_2009}). Moreover, N. Nadirashvili shows \cite{Nadirashvili_1997} that uniqueness of stochastic differential equations with \emph{measurable} coefficients satisfying the assumptions of non-degeneracy and boundedness in \cite[Theorem 4.6]{Gyongy} does not hold in general when $d \geq 3$.

Brunick and Shreve \cite[Corollary 2.16]{BrunickThesis}, \cite{Brunick_Shreve_2010} prove an extension of \cite[Theorem 4.6]{Gyongy} which relaxes the requirements that $\sigma\sigma^*$ be uniformly positive definite on $[0,\infty)\times\RR^d$ and that the functions $\sigma$ and $b$ are bounded on $[0,\infty)\times\RR^d$. Moreover, they significantly extend Gy\"ongy's theorem \cite{Gyongy} by replacing the non-degeneracy and boundedness conditions on the coefficients of the It\^o process, $X$, by a mild integrability condition \eqref{eq:IntegrabilityCondition}. Using purely probabilistic methods, they show existence of weak solutions to  stochastic differential equations of diffusion type which preserve not only the one-dimensional marginal distributions of the It\^o process, but also certain statistics, such as the running maximum or average of one of the components. More recently,  Brunick \cite{Brunick_2013} establishes weak uniqueness for a degenerate stochastic differential equation with applications to
pricing Asian options.

Bentata and Cont \cite{Bentata_Cont_mimicking} and Shi and Wang \cite{ShiThesis, WangThesis} extend Gy\"ongy's mimicking theorem to \emph{discontinuous}, non-degenerate semimartingales. Under assumptions of continuity and boundedness on the coefficients of the process and non-degeneracy condition of the diffusion matrix or of the Lev\'y operator, they prove uniqueness of solutions to the forward Kolmogorov equation associated with the generator of the mimicking process. In this setting, they show that weak uniqueness to the mimicking stochastic differential equation holds and that the mimicking process satisfies the Markov property.

M. Atlan \cite{Atlan_2006} obtains closed-form solutions for the mimicking coefficients when the It\^o process $X$ has the form in Example \ref{exmp:HestonSDE}, where the volatility modeled by a Bessel or Cox-Ingersoll-Ross (that is, a Feller square root) process. This is possible because explicit, tractable expressions are known for the distribution of Bessel processes and the Cox-Ingersoll-Ross process can be obtained from a Bessel process by a suitable transformation (the Cox-Ingersoll-Ross process is a deterministic, time-changed Bessel process, multiplied by a deterministic function of time \cite[Lemma 2.4]{Atlan_2006}).

\subsubsection{Uniqueness and strong Markov property of solutions to degenerate stochastic differential equations}
There are counterexamples to uniqueness of weak solutions to degenerate stochastic differential equations such as \eqref{eq:MimickingSDE}; see \cite{Cherny_Engelbert_2005}, \cite[\S 5.5]{KaratzasShreve1991}, and \cite{Rutkowski_1990}. Moreover, as noted in \cite[Example 2.2.10]{BrunickThesis}, the Markov property of solutions to \eqref{eq:MimickingSDE} is not guaranteed; see also \cite[Example 3.10]{Cherny_Engelbert_2005} for another example of non-Markov process arising as the solution to a one-dimensional stochastic differential equation. Sufficient conditions for weak solutions of degenerate stochastic differential equations such as \eqref{eq:MimickingSDE} to be Markov are provided by \cite[Theorem 7.1.2]{Oksendal_2003}, the combination \cite[Theorems 5.2.9 \& 5.4.20]{KaratzasShreve1991}, \cite{Krylov_ControlledDiffProc} and elsewhere.

\subsubsection{Uniqueness of solutions to degenerate martingale problems}
Uniqueness for solutions to the classical martingale problem (\cite[p.~138]{Stroock_Varadhan}, \cite[Definition 5.4.5 \& 5.4.10]{KaratzasShreve1991}), for suitable coefficients $(a,b)$, is proved in \cite[Chapter 7]{Stroock_Varadhan} (see \cite[\S 7.0]{Stroock_Varadhan} for a comprehensive outline), via uniqueness of solutions to a certain Kolmogorov backward equation. Special cases of uniqueness for solutions to the martingale problem are established in \cite[Theorem 6.3.4]{Stroock_Varadhan} (via uniqueness of weak solutions to a stochastic differential equation in \cite[Theorem 5.3.2]{Stroock_Varadhan}), \cite[Corollary 6.3.3]{Stroock_Varadhan} (via existence and uniqueness of solutions to a parabolic partial differential equation in \cite[Theorem 3.2.6]{Stroock_Varadhan}); as Stroock and Varadhan  observe \cite[\S 6.3]{Stroock_Varadhan}, their special cases do not cover situations where $\sigma\sigma^*$ is only non-negative definite.
Their general uniqueness result \cite[\S 7.0]{Stroock_Varadhan} does \emph{not} apply to the differential operator in Example \ref{exmp:HestonPDE} or other differential operators with similar degeneracies. Similarly, while uniqueness results for solutions to the martingale problem for certain degenerate elliptic differential operators is described by S. N. Ethier and T. G. Kurtz in \cite[Theorem 8.2.5]{Ethier_Kurtz}, their results do \emph{not} apply to the differential operator in Example \ref{exmp:HestonPDE} or other differential operators with similar degeneracies. Well-posedness of the martingale problem for certain time-homogeneous, degenerate operators was established in \cite{Athreya_Barlow_Bass_Perkins_2002}, \cite{Bass_Perkins_2003}, \cite{Bass_Perkins_2004} and \cite{Bass_Lavrentiev_2007}, but their results do not apply to our operator \eqref{eq:MartingaleGenerator} under the hypotheses of our Theorem \ref{thm:MainExistenceUniquenessMartProb}.

The following example of Stroock and Varadhan shows that solutions to degenerate martingale problems can easily fail to be unique.

\begin{exmp}[Non-uniqueness of solutions to certain degenerate martingale problems]
\cite[Exercise 6.7.7]{Stroock_Varadhan}
\label{exmp:StroockVaradhan}
Consider the one-dimensional generator, $\sA u(x)=(|x|^{\alpha}\wedge 1) u''(x)$, for $u \in C^2(\RR)$, with $0<\alpha<1$. The operator $\sA$ is degenerate at $x=0$ and uniqueness in law for solutions to the martingale problem for $\sA$ fails. See \cite{Engelbert_Schmidt_1984} and \cite[\S 5.5]{KaratzasShreve1991} for additional details. \qed
\end{exmp}

Additional examples of non-uniqueness of solutions to the (sub-)martingale problem are provided by R. F. Bass and A. Lavrentiev \cite{Bass_Lavrentiev_2007}, along with suitable boundary conditions designed to achieve uniqueness.

The well-known \emph{Yamada criterion} \cite[p. 115]{Yamada_1978} can be used to provide existence and uniqueness of strong solutions to one-dimensional stochastic differential equations with non-Lipschitz coefficients \cite[p. 117]{Yamada_1978}. A simple generalization was noted long ago by N. Ikeda and S. Watanabe \cite[Theorem IV.3.2 and footnote 1, p. 182]{Ikeda_Watanabe}, for coefficients $\sigma: \RR_+\times \RR^d \to \RR^{d\times r}$ and $b: \RR_+\times \RR^d \to \RR^d$ of a stochastic differential equation such as \eqref{eq:MimickingSDE} with $d>1$ and $r=1$. However, the case $d>1$ and $r>1$ is considerably more difficult. While there are more substantial generalizations of Yamada's theorem due to Luo \cite{Luo_2011}, Altay and Schmock \cite{Altay_Schmock_2012}, and references cited therein and though V. I. Bogachev, N. V. Krylov, M. R\"ockner, and X. Zhang \cite{Bogachev_Krylov_Rockner_2009, Rockner_Zhang_2010} also provide related uniqueness results, they do not cover the situation to which our Theorem \ref{thm:MainWeakExistenceUniquenessSDE} applies.

\subsection{Future research}
It would be useful to establish sufficient conditions on the coefficients of the It\^o process, $X$, which would ensure that our Assumption \ref{assump:Coeff} on the mimicking coefficients is satisfied, or relax these assumptions further.

Because the coefficients of the stochastic differential equation \eqref{eq:MimickingSDE} are H\"older continuous, it is natural to ask whether pathwise uniqueness for the weak solutions to the mimicking stochastic differential equation holds and so conclude that the weak solutions are actually strong. Positive answers to this question for certain degenerate stochastic differential equations are obtained by R. F. Bass, K. Burdzy and Z. Q. Chen \cite{Bass_Burdzy_Chen_2007}.


Given an arbitrary It\^o process with coefficients $(\xi,\beta)$, it is difficult to determine, in general, whether the coefficients $(\sigma, b)$ of the mimicking stochastic differential equation possess any further regularity than measurability. Bentata and Cont \cite{Bentata_Cont_mimicking} assume that the coefficients of the mimicking process are continuous and they provide a sufficient condition under which this assumption is satisfied. Specifically, if the It\^o process is a one-dimensional process given in the form
\[
X(t) = f(Z(t)), \quad \forall\, t \geq 0,
\]
where $f : \RR^d\rightarrow\RR $ is a $C^2$ function with bounded derivatives, $f_{x_d} \neq 0$, and $Z$ is a ``nice'', $\RR^d$-valued Markov process, then the mimicking coefficients are continuous functions. This construction is useful when one wants to reduce the dimensionality of a Markov process. In our future work, we hope to relax the local H\"older regularity condition on the mimicking coefficients and the conditions under which one can still recover the weak uniqueness of solutions.

We also plan to explore to what extent our techniques can be used in the presence of other types of degeneracy occurring in the diffusion matrix, for example, $\bar a_{ij}(t,x) = x_d^\alpha  a_{ij}(t,x)$, with $\alpha \neq 1$, where $a(t,x)$ is uniformly elliptic. By Example \ref{exmp:StroockVaradhan}, we expect that weak uniqueness will not hold for arbitrary values of $\alpha$, and then one may consider the question of Markovian selection of a weak solution that mimics the one-dimensional marginal distributions of the It\^o process.

\subsection{Outline of the article}
\label{subsec:Guide}
In \S \ref{sec:WeightedHolderSpaces}, we define the H\"older spaces required to prove Theorem \ref{thm:MainExistenceUniquenessPDE} (existence and uniqueness of solutions to a degenerate-parabolic partial differential equation on a half-space with unbounded coefficients) and provide a detailed description of the conditions required of the coefficients $(a,b,c)$ in the statement of Theorem \ref{thm:MainExistenceUniquenessPDE}. Section \ref{sec:MimickingTheorem} contains the proofs of Theorems \ref{thm:MainExistenceUniquenessMartProb}, \ref{thm:MainWeakExistenceUniquenessSDE}, and \ref{thm:MainMarginalsMatching}. In \S \ref{subsec:ExistenceSDE}, we prove existence of solutions to the degenerate martingale problem and degenerate stochastic differential equation specified in Theorems \ref{thm:MainExistenceUniquenessMartProb} and \ref{thm:MainWeakExistenceUniquenessSDE}, while in \S \ref{subsec:UniquenessSDE}, we
prove uniqueness and the strong Markov property in Theorems \ref{thm:MainExistenceUniquenessMartProb} and \ref{thm:MainWeakExistenceUniquenessSDE}. Lastly, in \S \ref{subsec:MatchingOneDimMarginals}, we prove our mimicking theorem for a degenerate It\^o process, namely, Theorem \ref{thm:MainMarginalsMatching}.

\subsection{Acknowledgments} We are very grateful to Gerard Brunick and Mihai S\^irbu for useful conversations and suggestions regarding the mimicking theorem and also to the anonymous referee for a careful reading of our manuscript, comments and suggestions, corrections, and an alternative, shorter proof of Proposition \ref{prop:ItoLemma}.

\section{Weighted H\"older spaces and coefficients of the differential operators}
\label{sec:WeightedHolderSpaces}
In \S \ref{subsec:DHKHolderSpaces}, we introduce the H\"older spaces required for the statement and proof of Theorem \ref{thm:MainExistenceUniquenessPDE}, while in \S \ref{subsec:AssumptionsPDECoefficients}, we describe the regularity and growth conditions required of the coefficients $(a, b, c)$ in Theorem \ref{thm:MainExistenceUniquenessPDE}. In \S \ref{subsec:MainExistenceUniquenessPDE}, we recall the statement of our previous result,
Theorem \ref{thm:MainExistenceUniquenessPDE} in \cite{Feehan_Pop_mimickingdegen_pde}, which establishes existence and uniqueness of solutions to the Cauchy problem for the parabolic operator $-\partial_t+\sA_t$.

\subsection{Weighted H\"older spaces}
\label{subsec:DHKHolderSpaces}
For $a>0$, we denote
$$
\HH_{a,T} := (0,T) \times\RR^{d-1}\times(0,a),
$$
and, when $T=\infty$, we denote $\HH_\infty = (0,\infty) \times\HH$ and $\HH_{a,\infty} = (0,\infty)\times\RR^{d-1}\times(0,a)$. We denote the usual closures these half-spaces and cylinders by $\overline{\HH}:=\RR^{d-1}\times[0,\infty)$, $\overline{\HH}_T:=[0,T]\times\overline\HH$, while $\overline{\HH}_{a,T}:=[0,T]\times\RR^{d-1}\times[0,a]$. We write points in $\HH$ as $x:=(x',x_d)$, where $x':=(x_1, x_2, \ldots, x_{d-1})\in \RR^{d-1}$. For $x^0 \in \overline{\HH}$ and $R>0$, we let
\begin{align*}
B_R(x^0) &:= \left\{x \in \HH: |x-x^0| < R \right\},
\\
Q_{R,T}(x^0) &:= (0,T)\times B_R(x^0),
\end{align*}
and denote their usual closures by $\bar B_R(x^0) := \{x \in \HH: |x-x^0| \leq R\}$ and $\bar Q_{R,T}(x^0) := [0,T]\times \bar B_R(x^0)$, respectively. We write $B_R$ or $Q_{R,T}$ when the center, $x^0$, is clear from the context or unimportant.

A parabolic partial differential equation with a degeneracy similar to that considered in this article arises in the study of the porous medium equation \cite{DaskalHamilton1998, Daskalopoulos_Rhee_2003, Koch}. The existence, uniqueness, and regularity theory for such equations is facilitated by the use of H\"older spaces defined by the \emph{cycloidal metric} on $\HH$ introduced by P. Daskalopoulos and R. Hamilton \cite{DaskalHamilton1998} and, independently, by H. Koch \cite{Koch}. See \cite[p. 901]{DaskalHamilton1998} for a discussion of this metric. Following \cite[p. $901$]{DaskalHamilton1998}, we define the \emph{cycloidal distance} between two points, $P_1=(t_1, x^1), P_2=(t_2, x^2) \in [0,\infty)\times\overline{\HH}$, by
\begin{equation}
\label{eq:CycloidalMetric}
\begin{aligned}
s(P_1, P_2) &:= \frac{\sum_{i=1}^d |x_i^1-x_i^2|}{\sqrt{x_d^1}+\sqrt{x_d^2}+\sqrt{\sum_{i=1}^{d-1} |x_i^1-x_i^2|}}+\sqrt{|t_1-t_2|}.
\end{aligned}
\end{equation}
Following \cite[p. 117]{Krylov_LecturesHolder}, we define the usual parabolic distance between points $P_1, P_2 \in [0,\infty)\times\RR^d$ by
\begin{equation}
\label{eq:UsualMetric}
\begin{aligned}
\rho(P_1, P_2) &:= \sum_{i=1}^d |x_i^1-x_i^2| + \sqrt{|t_1-t_2|}.
\end{aligned}
\end{equation}

\begin{rmk}[Equivalence of the cycloidal and
parabolic distance functions on suitable subsets of $[0,\infty)\times\HH$]
\label{rmk:EquivalenceMetric}
The cycloidal and parabolic distance functions, $s$ and $\rho$, are equivalent on sets of the form $[0,\infty)\times\RR^{d-1}\times[y_0,y_1]$, for any $0<y_0<y_1$.
\end{rmk}

Let $\Omega\subset (0,T)\times\HH$ be an open set and $\alpha \in (0,1)$. We denote by $C(\bar \Omega)$ the space of bounded, continuous functions on $\bar \Omega$, and by $C^{\infty}_0(\bar \Omega)$ the space of smooth functions with compact support in $\bar \Omega$. For a function $u:\bar \Omega\rightarrow\RR$, we consider the following norms and seminorms
\begin{align}
\label{eq:SupNorm}
 \|u\|_{C(\bar \Omega)}             &= \sup_{P \in \bar \Omega}  |u(P)|,\\
\label{eq:SeminormS}
 [u]_{C^{\alpha}_s(\bar \Omega)}      &= \sup_{\stackrel{P_1, P_2 \in \bar \Omega,} {P_1 \neq P_2}} \frac{|u(P_1)-u(P_2)|}{s^{\alpha}(P_1,P_2)},\\
\label{eq:SeminormRHO}
 [u]_{C^{\alpha}_{\rho}(\bar \Omega)} &= \sup_{\stackrel{P_1, P_2 \in \bar \Omega,} {P_1 \neq P_2}} \frac{|u(P_1)-u(P_2)|}{\rho^\alpha(P_1,P_2)}.
\end{align}
We say that $u \in C^{\alpha}_s(\bar \Omega)$ if $u \in  C(\bar \Omega)$ and
\[
\|u\|_{C^{\alpha}_s(\bar \Omega)} = \|u\|_{C(\bar \Omega)}+[u]_{C^{\alpha}_s(\bar \Omega)}  < \infty.
\]
Analogously, we define the H\"older space $C^{\alpha}_{\rho}(\bar \Omega)$ of functions $u$ which satisfy
\[
\|u\|_{C^{\alpha}_{\rho}(\bar \Omega)} = \|u\|_{C(\bar \Omega)}+[u]_{C^{\alpha}_{\rho}(\bar \Omega)}  < \infty.
\]
We say that $u \in C^{2+\alpha}_s(\bar \Omega)$ if
\[
\|u\|_{C^{2+\alpha}_s(\bar \Omega)}
:= \|u\|_{C^{\alpha}_s(\bar \Omega)}+\|u_t\|_{C^{\alpha}_s(\bar \Omega)}
+\max_{1\leq i\leq d}\|u_{x_i}\|_{C^{\alpha}_s(\bar \Omega)}
+\max_{1\leq i,j\leq d}\|x_du_{x_ix_j}\|_{C^{\alpha}_s(\bar \Omega)} <\infty,
\]
and $u \in C^{2+\alpha}_\rho(\bar \Omega)$ if
\[
\|u\|_{C^{2+\alpha}_{\rho}(\bar \Omega)}
= \|u\|_{C^{\alpha}_{\rho}(\bar \Omega)}+\|u_t\|_{C^{\alpha}_{\rho}(\bar \Omega)}
+ \max_{1\leq i\leq d} \|u_{x_i}\|_{C^{\alpha}_{\rho}(\bar \Omega)}
+ \max_{1\leq i,j\leq d} \|u_{x_ix_j}\|_{C^{\alpha}_{\rho}(\bar \Omega)}  <\infty.
\]
In other words, a function $u$ belongs to $C^{2+\alpha}_s(\bar\Omega)$ if $u$ and its first-order derivatives are H\"older continuous with respect to the cycloidal distance function, $s$, on the closed set $\bar\Omega$, and the second-order derivatives of $u$ multiplied by the weight, $x_d$, are H\"older continuous with respect to the cycloidal distance function, $s$, on $\bar\Omega$.

We denote by $C^{\alpha}_{s, \loc}(\bar \Omega)$ the space of functions $u$ with the property that for any compact set $K\subseteqq \bar \Omega$, we have $u \in C^{\alpha}_s(K)$. Analogously, we define the spaces $C^{2+\alpha}_{s, \loc}(\bar \Omega)$, $C^{\alpha}_{\rho, \loc}(\bar \Omega)$ and $C^{2+\alpha}_{\rho, \loc}(\bar \Omega)$.

We prove existence, uniqueness and regularity of solutions for a parabolic operator \eqref{eq:Generator} whose second-order coefficients are degenerate on $\partial\HH$. For this purpose, we will make use of the following H\"older spaces,

\begin{equation*}
\begin{aligned}
\sC^{\alpha}(\overline{\HH}_T)
&:= \left\{u: u \in C^{\alpha}_s(\overline{\HH}_{1,T}) \cap C^{\alpha}_{\rho}(\overline{\HH}_T\less\HH_{1,T}) \right\},\\
\sC^{2+\alpha}(\overline{\HH}_T)
&:= \left\{u: u \in C^{2+\alpha}_s(\overline{\HH}_{1,T}) \cap C^{2+\alpha}_{\rho}(\overline{\HH}_T\less\HH_{1,T}) \right\}.
\end{aligned}
\end{equation*}
We define $\sC^{\alpha}(\overline{\HH})$ and $\sC^{2+\alpha}(\overline{\HH})$ in the analogous manner.

The coefficient functions $x_d a_{ij}(t,x)$, $b_i(t,x)$ and $c(t,x)$ of the parabolic operator \eqref{eq:Generator} are allowed to have linear growth in $|x|$. To account for the unboundedness of the coefficients, we  augment our definition of H\"older spaces by introducing weights of the form $(1+|x|)^q$, where $q\geq 0$ will be suitably chosen in the sequel. For $q\geq0$, we define
\begin{equation}
\label{eq:DefinitionX_0_q}
\begin{aligned}
\|u\|_{\sC^0_q(\overline{\HH})}
&:= \sup_{x \in \overline{\HH}}\left(1+ |x|\right)^q |u(x)|,
\end{aligned}
\end{equation}
and, given $T>0$, we define
\begin{align}
\label{eq:DefinitionX_0_q_T}
\|u\|_{\sC^0_q(\overline{\HH}_T)}
&:= \sup_{(t,x)\in\overline{\HH}_T} \left(1+|x|\right)^q|u(t,x)|.
\end{align}
Moreover, given $\alpha\in(0,1)$, we define
\begin{align}
\label{eq:DefinitionX_Alpha_q_T}
\|u\|_{\sC^{\alpha}_q(\overline{\HH}_T)}
&:= \|u\|_{\sC^0_q(\overline{\HH}_T)} + [(1+|x|)^q u]_{C^{\alpha}_s(\overline{\HH}_{1,T})} + [(1+|x|)^q u]_{C^{\alpha}_{\rho}(\overline{\HH}_T\less\HH_{1,T})},\\
\label{eq:DefinitionX_Two_Aplha_q_T}
\|u\|_{\sC^{2+\alpha}_q(\overline{\HH}_T)}
&:=  \|u\|_{\sC^{\alpha}_q(\overline{\HH}_T)}+ \|u_t\|_{\sC^{\alpha}_q(\overline{\HH}_T)} + \|u_{x_i}\|_{\sC^{\alpha}_q(\overline{\HH}_T)}+ \|x_du_{x_ix_j}\|_{\sC^{\alpha}_q(\overline{\HH}_T)}.
\end{align}
The vector spaces
\begin{align*}
\sC^0_q(\overline{\HH}_T)
&:= \left\{u \in C(\overline{\HH}_T): \|u\|_{\sC^{0}_q(\overline{\HH}_T)} <\infty \right\},\\
\sC^{\alpha}_q(\overline{\HH}_T)
&:= \left\{u \in \sC^{\alpha}(\overline{\HH}_T): \|u\|_{\sC^{\alpha}_q(\overline{\HH}_T)} <\infty \right\},\\
\sC^{2+\alpha}_q(\overline{\HH}_T)
&:= \left\{u \in \sC^{2+\alpha}(\overline{\HH}_T): \|u\|_{\sC^{2+\alpha}_q(\overline{\HH}_T)} <\infty \right\},
\end{align*}
can be shown to be Banach spaces with respect to the norms \eqref{eq:DefinitionX_0_q_T}, \eqref{eq:DefinitionX_Alpha_q_T} and \eqref{eq:DefinitionX_Two_Aplha_q_T}, respectively. We define the vector spaces $\sC^0_q(\overline{\HH})$, $\sC^{\alpha}_q(\overline{\HH})$, and $\sC^{2+\alpha}_q(\overline{\HH})$ similarly, and each can be shown to be a Banach space when equipped with the corresponding norm.

We let $\sC^{2+\alpha}_{q, \loc}(\overline{\HH}_T)$ denote the vector space of functions $u$ such that for any compact set $K \subset \overline{\HH}_T$, we have $u \in \sC^{2+\alpha}_q(K)$, for all $q \geq 0$.

When $q=0$, the subscript $q$ is omitted in the preceding definitions.

\subsection{Coefficients of the differential operators}
\label{subsec:AssumptionsPDECoefficients}
In our companion article \cite{Feehan_Pop_mimickingdegen_pde}, we used a Schauder approach to prove existence, uniqueness, and regularity of solutions to the degenerate-parabolic partial differential equation,
\begin{equation}
\label{eq:Problem}
\begin{aligned}
\begin{cases}
Lu=f & \hbox{ on  } \HH_T,\\
u(0,\cdot)=g & \hbox{  on   } \overline{\HH},
\end{cases}
\end{aligned}
\end{equation}
where
\begin{equation}
\label{eq:Generator}
\begin{aligned}
-Lu = -u_t + \sum_{i,j=1}^d x_d a_{ij}u_{x_ix_j} + \sum_{i=1}^d b_iu_{x_i} + cu, \quad \forall\, u \in C^{2}(\HH_T).
\end{aligned}
\end{equation}
Observe that
$$
-Lu = -u_t + \sA_t u + cu,
$$
where $\sA_t$ is given in \eqref{eq:MartingaleGenerator}, provided we absorb the factor $1/2$ into the definition of the coefficients, $a_{ij}$. Unless other conditions are explicitly substituted, we require in this article that the coefficients $(a,b,c)$ of the parabolic differential operator $L$ in \eqref{eq:Generator} satisfy the conditions in the following

\begin{assump}[Properties of the coefficients of the parabolic differential operator]
\label{assump:Coeff}
There are constants $\delta>0$, $K>0$, $\nu>0$ and $\alpha \in (0,1)$ such that the following hold.
\begin{enumerate}
\item The coefficients $c$ and $b_d$ obey
\begin{align}
\label{eq:ZerothOrderTermUpperBound}
c(t,x) &\leq K, \quad \forall\, (t,x)\in \overline{\HH}_{\infty},
\\
\label{eq:CoeffBD}
b_d(t,x',0) &\geq \nu, \quad \forall\, (t,x')\in [0,\infty)\times\RR^{d-1}.
\end{align}
\item On $\overline{\HH}_{2,\infty}$ (that is, near $x_d=0$), we require that
\begin{gather}
\label{eq:NonDegeneracyNearBoundary}
\sum_{i,j=1}^d a_{ij} (t,x) \eta_i \eta_j \geq \delta |\eta|^2, \quad \forall\, \eta \in \mathbb{R}^d, \quad\forall\, (t,x)\in \overline{\HH}_{2,\infty},
\\
\label{eq:BoundednessNearBoundary}
\max_{1\leq i,j\leq d}\|a_{ij}\|_{C(\overline{\HH}_{2,\infty})} + \max_{1\leq i\leq d}\|b_i\|_{C(\overline{\HH}_{2,\infty})} + \|c\|_{C(\overline{\HH}_{2,\infty})} \leq K,
\end{gather}
and, for all $P_1, P_2 \in \overline{\HH}_{2,\infty}$ such that $P_1 \neq P_2$ and $s(P_1, P_2) \leq 1$,
\begin{equation}
\label{eq:LocalHolderS}
\begin{aligned}
\max_{1\leq i,j\leq d}\frac{|a_{ij}(P_1)-a_{ij}(P_2)|}{s^{\alpha}(P_1,P_2)} &\leq K,
\\
\max_{1\leq i\leq d}\frac{|b_i(P_1)-b_i(P_2)|}{s^{\alpha}(P_1,P_2)} &\leq K,
\\
\frac{|c(P_1)-c(P_2)|}{s^{\alpha}(P_1,P_2)} &\leq K.
\end{aligned}
\end{equation}
\item On $\overline{\HH}_{\infty} \less \HH_{2,\infty}$ (that is, farther away from $x_d=0$), we require that
\begin{align}
\label{eq:NonDegeneracyInterior}
\sum_{i,j=1}^d x_da_{ij} (t,x) \eta_i \eta_j \geq \delta |\eta|^2, \quad \forall\, \eta \in \mathbb{R}^d, \quad\forall\, (t,x)\in \overline{\HH}_{\infty} \less \HH_{2,\infty},
\end{align}
and, for all $P_1, P_2 \in \overline{\HH}_{\infty} \less \HH_{2,\infty}$ such that $P_1 \neq P_2$ and $\rho(P_1, P_2) \leq 1$,
\begin{equation}
\label{eq:LocalHolderRho}
\begin{aligned}
\max_{1\leq i,j\leq d}\frac{|x_d^1a_{ij}(P_1)-x_d^2a_{ij}(P_2)|}{\rho^{\alpha}(P_1,P_2)} &\leq K,
\\
\max_{1\leq i\leq d}\frac{|b_i(P_1)-b_i(P_2)|}{\rho^{\alpha}(P_1,P_2)} &\leq K,
\\
\frac{|c(P_1)-c(P_2)|}{\rho^{\alpha}(P_1,P_2)} &\leq K.
\end{aligned}
\end{equation}
\end{enumerate}
\end{assump}

\begin{rmk}[Local H\"older conditions on the coefficients]
The local H\"older conditions \eqref{eq:LocalHolderS} and \eqref{eq:LocalHolderRho} are similar to those in \cite[Hypothesis 2.1]{Krylov_Priola_2010}.
\end{rmk}

\begin{rmk}[Linear growth of the coefficients of the parabolic differential operator]
\label{rmk:LinearGrowth}
Conditions \eqref{eq:BoundednessNearBoundary} and \eqref{eq:LocalHolderRho} imply that the coefficients $x_d a_{ij}(t,x)$, $b_i(t,x)$ and $c(t,x)$ can have at most linear growth in $x$. In particular, we have
\begin{equation}
\label{eq:LinearGrowth}
\sum_{i,j=1}^d|x_da_{ij}(t,x)| + \sum_{i=1}^d|b_i(t,x)| + |c(t,x)| \leq K(1+|x|), \quad \forall\, (t,x) \in \overline{\HH}_{\infty},
\end{equation}
for a sufficiently large positive constant, $K$.
\end{rmk}

\subsection{Existence and uniqueness of solutions to a degenerate-parabolic partial differential equation with unbounded coefficients}
\label{subsec:MainExistenceUniquenessPDE}
We now recall our main result from \cite{Feehan_Pop_mimickingdegen_pde}. The existence of solutions asserted by Theorem \ref{thm:MainExistenceUniquenessPDE} is used in Proposition \ref{prop:MarginalsUniqueness} to prove uniqueness of the one-dimensional marginal distributions of solutions to the mimicking stochastic differential equation \eqref{eq:MimickingSDE}, and is used in Theorem \ref{thm:MainMarginalsMatching} to prove that the one-dimensional marginal distributions of the It\^o process \eqref{eq:ItoProcess} match those of the solution to the mimicking stochastic differential equation \eqref{eq:MimickingSDE} satisfying the same initial condition.

\begin{thm} [Existence and uniqueness of solutions to a degenerate-parabolic partial differential equation with unbounded coefficients]
\label{thm:MainExistenceUniquenessPDE}
\cite[Theorem 1.1]{Feehan_Pop_mimickingdegen_pde}
Assume that the coefficients $(a,b,c)$ in \eqref{eq:Generator} obey the conditions in Assumption \ref{assump:Coeff}. Then there is a positive constant $p$, depending only on the H\"older exponent $\alpha\in(0,1)$, such that for any $T>0$, $f \in \sC^{\alpha}_p(\overline{\HH}_T)$ and $g \in \sC^{2+\alpha}_p(\overline{\HH})$, there exists a unique solution
$u \in \sC^{2+\alpha}(\overline{\HH}_T)$ to \eqref{eq:Problem}. Moreover, $u$ satisfies the a priori estimate
\begin{equation}
\label{eq:GlobalEstimate}
\|u\|_{\sC^{2+\alpha}(\overline{\HH}_T)}
\leq C \left(  \|f\|_{\sC^{\alpha}_p(\overline{\HH}_T)} +  \|g\|_{\sC^{2+\alpha}_p(\overline{\HH})} \right),
\end{equation}
where $C$ is a positive constant, depending only on $d$, $\alpha$, $T$, and the constants $K$, $\nu$, $\delta$
in Assumption \ref{assump:Coeff}.
\end{thm}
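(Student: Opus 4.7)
The plan is to prove Theorem \ref{thm:MainExistenceUniquenessPDE} by a Schauder approach adapted to the degenerate structure of $L$ on $\overline\HH$, combined with the method of continuity and a weak maximum principle. The proof splits naturally according to the two regimes in Assumption \ref{assump:Coeff}: the region $\overline\HH_{2,T}$ near the degenerate boundary $\partial\HH = \{x_d = 0\}$, where the cycloidal metric $s$ governs the intrinsic scaling, and the complement $\overline\HH_T \setminus \HH_{1,T}$, where $x_d a$ is uniformly elliptic and $\rho$ is equivalent to $s$ by Remark \ref{rmk:EquivalenceMetric}.

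First I would establish a priori Schauder estimates on each region separately. In the interior region, the operator $L$ is uniformly parabolic on bounded subdomains, so classical parabolic Schauder estimates (in the $\rho$-metric) apply locally; the linear growth \eqref{eq:LinearGrowth} of the coefficients is absorbed by the polynomial weight $(1+|x|)^q$, localizing on balls $Q_{R,T}(x^0)$, applying the classical estimate with its explicit dependence on local coefficient norms, and summing against the weight to obtain a bound in $\sC^{2+\alpha}_q$ in terms of $f$ and $g$ with weight $p = q + \kappa$ for a suitable $\kappa$. Near the boundary the $s$-metric is essential: I would freeze coefficients at a base point $P_0 \in \overline\HH_{2,T}$ and compare to the model operator $\sA_0 = x_d a_{ij}(P_0)\partial_{ij} + b_i(P_0)\partial_i$ studied by Daskalopoulos--Hamilton and Koch, exploiting the fact that $b_d(P_0) \geq \nu > 0$ means the boundary is non-attainable in the Fichera sense, so no condition on $\{x_d = 0\}$ is imposed. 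The model kernel satisfies sharp pointwise bounds in terms of $s$, yielding $C^{2+\alpha}_s$ estimates via standard perturbation and interpolation arguments.

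Patching the two estimates across the overlap $\{1 \leq x_d \leq 2\}$ with a smooth cutoff $\chi(x_d)$ produces the global a priori bound \eqref{eq:GlobalEstimate}. With this in hand, existence follows by the method of continuity applied to the family $L_\tau = (1-\tau)L_0 + \tau L$, with $L_0$ a simple reference operator --- for instance, a heat operator in $x' \in \RR^{d-1}$ tensored with a one-dimensional Bessel-type operator $-\partial_t + x_d \partial_{x_d}^2 + \nu \partial_{x_d}$ in $x_d$ --- whose invertibility $\sC^{2+\alpha}(\overline\HH_T) \to \sC^\alpha_p(\overline\HH_T) \times \sC^{2+\alpha}_p(\overline\HH)$ can be verified directly by a semigroup or representation formula. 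The uniform-in-$\tau$ a priori estimate then gives surjectivity at $\tau = 1$. For uniqueness I would prove a weak maximum principle for $L$ on the class $\sC^{2+\alpha}(\overline\HH_T)$ by testing against a barrier of the form $\varphi(t,x) = e^{\lambda t}(1 + |x|^2)^{p/2}$ with $\lambda$ chosen large relative to $K$ so that $-L\varphi \geq 1$; the lower bound $b_d \geq \nu > 0$ enforces the correct outward drift at $\partial\HH$ and prevents probability mass (in the dual sense) from escaping through the degenerate face.

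The main obstacle I anticipate is the boundary Schauder estimate. The $s$-metric stretches $x_d$-distances as $x_d \to 0$, so local freezing of coefficients must be carried out on balls whose $s$-radius is bounded but whose Euclidean size shrinks at the boundary; controlling the commutator terms that arise when passing from $L$ to $\sA_0$, and propagating H\"older regularity of $u_t$ and $x_d u_{x_i x_j}$ up to $\{x_d = 0\}$, requires delicate analysis of the model Green's function and careful use of interpolation between $\|\cdot\|_{C^0}$ and $[\,\cdot\,]_{C^\alpha_s}$. A secondary difficulty is that the polynomial weight $(1+|x|)^q$ must interact correctly with the cycloidal norms on the unbounded horizontal directions inside $\overline\HH_{2,T}$, which forces the exponent $p$ in \eqref{eq:GlobalEstimate} to be strictly larger than $q = 0$ and to depend on $\alpha$ alone.
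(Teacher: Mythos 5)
First, a structural point: the present article does not prove Theorem \ref{thm:MainExistenceUniquenessPDE} at all --- it is imported verbatim from the companion paper \cite{Feehan_Pop_mimickingdegen_pde}, and \S \ref{subsec:AssumptionsPDECoefficients} says only that the companion article establishes it ``using a Schauder approach.'' Your outline does match that strategy in broad terms: weighted H\"older spaces built from the Daskalopoulos--Hamilton/Koch cycloidal metric near $\{x_d=0\}$ and the parabolic metric away from it, localization and coefficient-freezing against a degenerate model operator, patching across $\{1\le x_d\le 2\}$, the method of continuity, and a maximum principle exploiting $b_d\ge\nu>0$ so that no boundary condition is imposed on $\partial\HH$. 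So the roadmap is the right one.

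However, as a proof there is a genuine gap: every step that carries the actual analytic weight is asserted or ``anticipated'' rather than carried out, and those steps are precisely the content of the cited companion paper. Concretely: (i) the interior $C^{2+\alpha}_s$ Schauder estimate for the model operator $-\partial_t+x_d a_{ij}(P_0)\partial_{x_ix_j}+b_i(P_0)\partial_{x_i}$ up to the degenerate face, including H\"older continuity of $u_t$, $u_{x_i}$, and $x_d u_{x_ix_j}$ in the $s$-metric, requires either the explicit model kernel or an integral/energy argument; you name this as an obstacle but supply neither. (ii) The invertibility of your reference operator $L_0$ on $\sC^{2+\alpha}(\overline\HH_T)$ --- needed to start the method of continuity --- is exactly the existence theory for the model Cauchy problem and cannot be ``verified directly'' without the kernel estimates of (i). (iii) The passage from local estimates to the global bound \eqref{eq:GlobalEstimate} with \emph{unbounded} coefficients is not mere bookkeeping: the classical Schauder constant on a unit cylinder centered at $x^0$ grows with $|x^0|$ because the local coefficient norms grow linearly, and one must show that a weight exponent $p$ depending only on $\alpha$ suffices to dominate this growth and still land the solution in the \emph{unweighted} space $\sC^{2+\alpha}(\overline\HH_T)$; your sketch postulates ``$p=q+\kappa$ for a suitable $\kappa$'' without establishing that such a $\kappa$ exists or is independent of the data. (iv) The maximum-principle barrier $e^{\lambda t}(1+|x|^2)^{p/2}$ must be checked against the degenerate operator near $x_d=0$, where $-L\varphi\ge 1$ hinges on the sign of $b_d\varphi_{x_d}$ and on the vanishing of the second-order term; this is plausible but unverified. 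In short, the proposal is a correct plan coinciding with the authors' strategy, but not a proof of the theorem.
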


We note that in Theorem \ref{thm:MainExistenceUniquenessPDE} it is not necessary to specify a boundary condition along the portion $(0,T)\times\partial\HH$ of parabolic boundary of the domain $(0,T)\times\HH$ in order to obtain uniqueness. See the introductions to \cite{Feehan_parabolicmaximumprinciple, Feehan_maximumprinciple, Feehan_perturbationlocalmaxima} for more detailed discussions and examples illustrating this feature of `boundary-degenerate' parabolic operators.

\section{Martingale problem and the mimicking theorem}
\label{sec:MimickingTheorem}
In this section, we prove Theorem \ref{thm:MainWeakExistenceUniquenessSDE} concerning the degenerate stochastic differential equation with unbounded coefficients \eqref{eq:MimickingSDE}, and establish the main result, Theorem \ref{thm:MainMarginalsMatching}. Our method of proof combines ideas from the martingale problem formulation of Stroock and Varadhan \cite{Stroock_Varadhan} and the existence of solutions in suitable H\"older spaces, $\sC^{2+\alpha}(\overline{\HH}_T)$, to the homogeneous version of the initial value problem established in Theorem \ref{thm:MainExistenceUniquenessPDE}. In \S \ref{subsec:ExistenceSDE}, we prove existence of weak solutions to the mimicking stochastic differential equation \eqref{eq:MimickingSDE} and the existence of solutions to the martingale problem associated to $\sA_t$. In \S \ref{subsec:UniquenessSDE}, we establish uniqueness in law of solutions to \eqref{eq:MimickingSDE} and to the martingale problem for $\sA_t$, thus proving Theorems \ref{thm:MainWeakExistenceUniquenessSDE} and \ref{thm:MainExistenceUniquenessMartProb}; in \S \ref{subsec:MatchingOneDimMarginals}, we establish the matching property for the one-dimensional probability distributions for solutions to \eqref{eq:MimickingSDE} and of an It\^o process, thus proving Theorem \ref{thm:MainMarginalsMatching}.

\subsection{Existence of solutions to the martingale problem and of weak solutions to the stochastic differential equation}
\label{subsec:ExistenceSDE}
In this subsection, we show that \eqref{eq:MimickingSDE} has a weak solution
$(\widehat X,\widehat W)$, $\left(\Omega, \sF, \PP\right)$, $\{\sF_t\}_{t\geq 0}$
\cite[Definition 5.3.1]{KaratzasShreve1991}, for any initial point $x\in\overline{\HH}$, by proving existence of solutions to the martingale problem associated to $\sA_t$ (Definition \ref{defn:Martingale_Problem}).

We begin with an intuitive property of solutions to \eqref{eq:MimickingSDE} defined by an initial condition in $\overline\HH$. For this purpose, we consider coefficients defined on $[0,\infty)\times\RR^d$, instead of $[0,\infty)\times\overline\HH$. While Proposition \ref{prop:LawSupport} could also be proved using the generalized It\^o's formula \cite[Theorem 3.7.1 (v)]{KaratzasShreve1991} applied to the convex function $\varphi(x)=x_d^+$, for all $x\in\RR^d$, rather than the smooth function $\varphi$ in \eqref{eq:LawSupport1} in our proof below, we would also need to study the properties of the local time of $\widehat X_d(t)$ at zero. The fact that the local time of $\widehat X_d(t)$ at zero is trivial is proved in \cite[Lemma 6.6]{Bayraktar_Kardaras_Xing_2012} for a simpler process. The simple proof we give below avoids the need to consider the local time of the process $\widehat X_d(t)$.

\begin{prop}[Solutions started in a half-space remain in a half-space]
\label{prop:LawSupport}
Let
\begin{align*}
\tilde\sigma &:[0,\infty)\times\RR^d \rightarrow \RR^{d\times d},
\\
\tilde b &:[0,\infty)\times\RR^d \rightarrow \RR^d,
\end{align*}
be Borel measurable functions. Assume that
\begin{align}
\label{eq:LawSupportDiffusionZero}
\tilde \sigma(t,x)=0 \quad \hbox{when   } x_d <0,
\end{align}
and $\tilde b$ satisfies
\begin{equation}
\label{eq:FormOfb_d}
\tilde b_d(t,x) \geq 0 \quad \hbox{ when } x_d<0.
\end{equation}
If $(\widehat X,\widehat W)$, $\left(\Omega, \sF, \PP\right)$, $\{\sF_t\}_{t\geq 0}$ is a weak solution to
\[
d \widehat X = \tilde b(t, \widehat X(t)) dt + \tilde \sigma(t, \widehat X(t)) d \widehat W(t),\quad t\geq s,
\]
such that $\widehat X(s)\in\overline{\HH}$, then
\begin{equation}
\label{eq:LawSupport}
\PP \left(\widehat X(t)\in\overline{\HH}\right)=1, \quad \forall\, t \geq s.
\end{equation}
\end{prop}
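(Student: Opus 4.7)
The plan is to apply It\^o's formula to $\varphi(\widehat X_d(t))$ for a suitably chosen smooth, non-negative function $\varphi$ that detects negative values of $x_d$, and then show that all the terms in the resulting expansion either vanish outright or have non-positive sign under the assumptions \eqref{eq:LawSupportDiffusionZero} and \eqref{eq:FormOfb_d}.

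First I would choose $\varphi \in C^2(\RR)$ with the following four properties: $\varphi \geq 0$, $\varphi(y) = 0$ for $y \geq 0$, $\varphi'(y) \leq 0$, and $\varphi''(y) \geq 0$ for all $y \in \RR$. A concrete choice is $\varphi(y) = ((-y)^+)^3$; then $\varphi \in C^2(\RR)$, $\varphi'(y) = -3((-y)^+)^2$, and $\varphi''(y) = 6(-y)^+$. Crucially, both $\varphi'$ and $\varphi''$ vanish on $[0,\infty)$.

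Next I would apply It\^o's formula to $\varphi(\widehat X_d(t))$. Since $\widehat X_d(s) \geq 0$ we have $\varphi(\widehat X_d(s)) = 0$, so
\begin{equation*}
\varphi(\widehat X_d(t)) = \int_s^t \varphi'(\widehat X_d(u)) \tilde b_d(u,\widehat X(u))\,du + M_t + \frac{1}{2}\int_s^t \varphi''(\widehat X_d(u)) (\tilde\sigma \tilde\sigma^*)_{dd}(u,\widehat X(u))\,du,
\end{equation*}
where $M_t = \int_s^t \varphi'(\widehat X_d(u)) \sum_{k=1}^d \tilde\sigma_{dk}(u,\widehat X(u))\,d\widehat W_k(u)$. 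Now inspect each term. Where $\widehat X_d(u) \geq 0$, the factor $\varphi'(\widehat X_d(u))$ (respectively $\varphi''(\widehat X_d(u))$) vanishes; where $\widehat X_d(u) < 0$, the hypothesis \eqref{eq:LawSupportDiffusionZero} forces $\tilde\sigma(u,\widehat X(u)) = 0$, hence both $\tilde\sigma_{dk}$ and $(\tilde\sigma\tilde\sigma^*)_{dd}$ vanish. Consequently the integrand of $M_t$ is identically zero (so $M_t \equiv 0$) and the quadratic variation term is identically zero. Finally, for the drift term, on $\{\widehat X_d(u) < 0\}$ we have $\varphi'(\widehat X_d(u)) \leq 0$ and, by \eqref{eq:FormOfb_d}, $\tilde b_d(u,\widehat X(u)) \geq 0$, so the product is non-positive; on $\{\widehat X_d(u) \geq 0\}$, $\varphi' = 0$. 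Thus $\varphi(\widehat X_d(t)) \leq 0$ pathwise, and combined with $\varphi \geq 0$ this gives $\varphi(\widehat X_d(t)) = 0$, i.e.\ $\widehat X_d(t) \geq 0$, $\PP$-a.s., which is \eqref{eq:LawSupport}.

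The one technical wrinkle is justifying It\^o's formula and in particular handling the integrability of $\tilde b_d(\cdot, \widehat X(\cdot))$, which is only asserted locally by virtue of $(\widehat X,\widehat W)$ being a weak solution. I would address this by the standard localization: introduce the stopping times $\tau_n := \inf\{t \geq s : |\widehat X(t)| \geq n\}$, apply It\^o's formula on $[s, t \wedge \tau_n]$ where all the integrands are bounded, obtain $\varphi(\widehat X_d(t \wedge \tau_n)) \leq 0$, and pass to the limit $n \to \infty$ using continuity of sample paths and $\tau_n \uparrow \infty$ (non-explosion of the weak solution on finite intervals). I expect this localization to be the only non-trivial step; the structural argument itself is forced by the sign properties of $\varphi'$ and $\varphi''$ together with the two hypotheses on $\tilde\sigma$ and $\tilde b_d$, and no analysis of a local time at $0$ is required because $\varphi''$ is a bona fide bounded function rather than a distribution.
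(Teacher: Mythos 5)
Your proof is correct and follows essentially the same route as the paper: apply It\^o's formula to a smooth, non-negative function of $\widehat X_d$ vanishing on $[0,\infty)$, kill the martingale and second-order terms using \eqref{eq:LawSupportDiffusionZero}, and observe that the drift term is non-positive by \eqref{eq:FormOfb_d}. The only (cosmetic) differences are that the paper uses a family of bounded cutoffs $\varphi_\eps$ and lets $\eps\downarrow 0$ instead of your single unbounded $((-y)^+)^3$ (which is why you need the localization, though note that stopping at $\tau_n$ bounds $\varphi'(\widehat X_d)$ but not $\tilde b_d(\cdot,\widehat X(\cdot))$, whose integrability must instead be taken from the definition of a weak solution), and the paper's $\varphi$ need not be convex since the second-order term vanishes anyway.
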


\begin{proof}
It is sufficient to show that for any $\eps>0$, we have
\begin{equation}
\label{eq:LawSupport2}
\mathbb{P} \left(\widehat{X}_d(t) \in (-\infty,-\eps)\right) = 0, \quad \forall\, t \geq s.
\end{equation}
Let $\varphi$ be a smooth cutoff function on $\RR$ such that
\begin{equation}
\label{eq:LawSupport1}
0 \leq \varphi\leq 1 \hbox{ and } \varphi' \leq 0 \quad\hbox{on }\RR,
\quad
\varphi(x) = 1 \quad\hbox{for } x < -\eps,
\quad
\varphi(x) = 0 \quad\hbox{for } x > 0,
\end{equation}
and $|\varphi'|$ is bounded on $\RR$. It\^o's formula \cite[Theorem 3.3.3]{KaratzasShreve1991} implies that
\begin{align*}
\varphi(\widehat{X}_d(t))
&= \varphi(\widehat{X}_d(s)) + \int_{s}^{t} \sum_{i=0}^d\tilde \sigma_{di} (v, \widehat{X}(v)) \varphi'(\widehat{X}_d(v)) \,d\widehat W_i(v)\\
&\quad+\int_{s}^{t} \left[\tilde b_d(v, \widehat{X}(v)) \varphi'(\widehat{X}_d(v))+\frac{1}{2}(\tilde\sigma\tilde\sigma^*)_{dd}(v, \widehat{X}(v)) \varphi''(\widehat{X}_d(v))\right]\,dv,
\end{align*}
and so, because $\supp\varphi\subset (-\infty,0]$ and $\tilde \sigma$ obeys \eqref{eq:LawSupportDiffusionZero},
we have
\begin{equation*}
\varphi(\widehat{X}_d(t))
= \varphi(\widehat{X}_d(s)) + \int_{s}^{t} \tilde b_d(v, \widehat{X}(v)) \varphi'(\widehat{X}_d(v))\,dv.
\end{equation*}
By \eqref{eq:FormOfb_d}, \eqref{eq:LawSupport1} and the fact that $|\varphi'|$ is bounded on $\RR$, the integral term in the preceding identity is well-defined and is  non-positive. Therefore, we must have $\varphi(\widehat{X}_d(t)) \leq 0$ and hence $\varphi(\widehat{X}_d(t)) = 0$, for any choice of $\eps>0$, from where \eqref{eq:LawSupport2} and then \eqref{eq:LawSupport} follow.
\end{proof}

\begin{rmk}[Weak solutions are independent of choice of extension of coefficients to lower half-space]
\label{rmk:LawSupport}
Let
\[
\tilde b^i :[0,\infty)\times\RR^d \rightarrow \RR^d, \quad i=1,2,
\]
be measurable functions which satisfy condition \eqref{eq:FormOfb_d}, and assume
\begin{equation}
\label{eq:CoeffAgree}
\tilde b^1 = \tilde b^2 \quad \hbox{   on   } \quad [0,\infty)\times\overline\HH.
\end{equation}
Let $\tilde \sigma$ be a measurable function as in the hypotheses of Proposition \ref{prop:LawSupport}. Let $\widehat X$ be a weak solution to
\begin{equation}
\begin{aligned}
\label{eq:ExtendedSDE1}
d \widehat X(t) &= \tilde b^1(t,\widehat X(t)) dt +\tilde \sigma(t, \widehat X(t)) d \widehat W(t), \quad \forall\, t \geq s,
\end{aligned}
\end{equation}
such that
\[
\PP\left(\widehat X(s) \in \overline\HH\right)=1.
\]
Then, Proposition \ref{prop:LawSupport} shows that $\widehat X(t)$ remains supported in $\overline\HH$, for all $t \geq s$. By \eqref{eq:CoeffAgree}, it follows that $\widehat X$ is a weak solution to
\begin{equation}
\begin{aligned}
\label{eq:ExtendedSDE2}
d \widehat X(t) &= \tilde b^2(t,\widehat X(t)) dt +\tilde \sigma(t, \widehat X(t)) d \widehat W(t), \quad \forall\, t \geq s.
\end{aligned}
\end{equation}
This simple observation shows that, under the hypotheses of Proposition \ref{prop:LawSupport}, any weak solution started in $\overline \HH$ to \eqref{eq:ExtendedSDE1} is a weak solution to \eqref{eq:ExtendedSDE2}, and vice versa.
\end{rmk}

We have the following consequence of the existence theorem \cite[Theorem 5.3.10]{Ethier_Kurtz} of weak solutions for stochastic differential equations with unbounded, continuous coefficients. The main difference between \cite[Theorem 5.3.10]{Ethier_Kurtz} and Theorem \ref{thm:WeakExistence_SDE} is that the coefficients, $b$ and $\sigma$, in \eqref{eq:MimickingSDE} are defined on $[0,\infty)\times\bar\HH$, while the coefficients of the stochastic differential equation considered in \cite[Theorem 5.3.10]{Ethier_Kurtz} are defined on $[0,\infty)\times\RR^d$.

\begin{thm}[Existence of weak solutions to a stochastic differential equation with continuous coefficients]
\label{thm:WeakExistence_SDE}
Assume that the coefficients $\sigma$ and $b$ in \eqref{eq:MimickingSDE} are continuous on $[0,\infty)\times\overline\HH$, that
\begin{align}
\label{eq:Boundary_cond_sigma}
\sigma(t,x)&=0, \quad\forall\, (t,x)\in [0,\infty)\times\partial\HH,
\\
\label{eq:Boundary_cond_b}
b_d(t,x)&\geq 0, \quad\forall\, (t,x)\in [0,\infty)\times\partial\HH,
\end{align}
and that $\sigma\sigma^*$ and $b$ have at most linear growth in the spatial variable, uniformly in time, that is, there is a positive constant, $C$, such that
\begin{equation}
\label{eq:Linear_growth_unif_time}
|\sigma(t,x)\sigma^*(t,x)|+|b(t,x)| \leq C(1+|x|),\quad\forall\, (t,x)\in [0,\infty)\times\overline\HH.
\end{equation}
Then, for any $(s,x)\in[0,\infty)\times\overline{\HH}$, there exist weak solutions $(\widehat X, \widehat W)$, $(\Omega, \sF, \PP)$, $(\sF_t)_{t\geq s}$, to \eqref{eq:MimickingSDE} such that $\widehat X(s)=x$.
\end{thm}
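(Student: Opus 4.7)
The plan is to reduce to the classical existence theorem \cite[Theorem 5.3.10]{Ethier_Kurtz}, which applies to stochastic differential equations on $\RR^d$ with continuous coefficients of at most linear growth, by extending the coefficients $(\sigma, b)$ from $[0,\infty)\times\overline\HH$ to $[0,\infty)\times\RR^d$. The extensions will be arranged so that the hypotheses of Proposition \ref{prop:LawSupport} hold, allowing me to then deduce that any weak solution to the extended SDE that starts in $\overline\HH$ in fact stays in $\overline\HH$ almost surely, and hence is a weak solution to the original SDE in light of Remark \ref{rmk:LawSupport}.

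Concretely, I would extend $\sigma$ by setting $\tilde\sigma(t,x) := \sigma(t,x)$ for $x_d \geq 0$ and $\tilde\sigma(t,x) := 0$ for $x_d < 0$; continuity across $\{x_d = 0\}$ is ensured by the boundary condition \eqref{eq:Boundary_cond_sigma}, and the linear growth bound \eqref{eq:Linear_growth_unif_time} passes trivially to $\tilde\sigma\tilde\sigma^*$. For the drift I would extend by flattening along the $x_d$-direction, for example
\[
\tilde b(t,x) := b(t, x', x_d \vee 0),
\]
which is continuous on $[0,\infty)\times\RR^d$, inherits linear growth from \eqref{eq:Linear_growth_unif_time}, and satisfies $\tilde b_d(t,x) = b_d(t,x',0) \geq 0$ whenever $x_d < 0$ by \eqref{eq:Boundary_cond_b}. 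With these extensions in hand, the hypotheses \eqref{eq:LawSupportDiffusionZero} and \eqref{eq:FormOfb_d} of Proposition \ref{prop:LawSupport} are satisfied.

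The remaining steps are then essentially formal. I would invoke \cite[Theorem 5.3.10]{Ethier_Kurtz} to produce a weak solution $(\widehat X,\widehat W)$, $(\Omega,\sF,\PP)$, $\{\sF_t\}_{t\geq s}$ to the extended SDE
\[
d\widehat X(t) = \tilde b(t,\widehat X(t))\,dt + \tilde\sigma(t,\widehat X(t))\,d\widehat W(t), \quad t \geq s,
\]
with initial condition $\widehat X(s) = x \in \overline\HH$. Proposition \ref{prop:LawSupport} then gives $\PP(\widehat X(t) \in \overline\HH) = 1$ for all $t \geq s$, so that $\tilde\sigma(t,\widehat X(t)) = \sigma(t,\widehat X(t))$ and $\tilde b(t,\widehat X(t)) = b(t,\widehat X(t))$ almost surely for all $t \geq s$, showing that $(\widehat X,\widehat W)$ is a weak solution to the original equation \eqref{eq:MimickingSDE}.

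I do not anticipate a genuine obstacle: the only non-bookkeeping point is designing an extension of $(\sigma,b)$ that is simultaneously continuous, of linear growth, and compatible with the half-space sign conditions required by Proposition \ref{prop:LawSupport}. The constant-in-$x_d$ extension of $b$ below the boundary is the natural choice, and the zero extension of $\sigma$ is forced by \eqref{eq:LawSupportDiffusionZero}; the continuity of each at $\{x_d=0\}$ is exactly what the boundary conditions \eqref{eq:Boundary_cond_sigma} and \eqref{eq:Boundary_cond_b} provide.
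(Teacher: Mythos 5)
Your proposal is correct and follows essentially the same route as the paper's proof: extend $\sigma$ by zero and $b$ so that Proposition \ref{prop:LawSupport} applies, invoke \cite[Theorem 5.3.10]{Ethier_Kurtz} for the extended SDE on $\RR^d$, and use Proposition \ref{prop:LawSupport} to conclude the solution stays in $\overline\HH$ and hence solves the original equation. Your explicit choice $\tilde b(t,x) = b(t,x',x_d\vee 0)$ is a concrete instance of the extension the paper merely asserts to exist.
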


\begin{proof}
Because $\sigma$ is continuous on $[0,\infty)\times\overline\HH$ and satisfies condition \eqref{eq:Boundary_cond_sigma}, we may extend $\sigma$ as a continuous function to $[0,\infty)\times\RR^d$ such that \eqref{eq:LawSupportDiffusionZero} is satisfied. We denote this extension by $\tilde\sigma\in C_{\loc}([0,\infty)\times\RR^d)$. Similarly, using \eqref{eq:Boundary_cond_b}, we can choose an extension, $\tilde b\in C_{\loc}([0,\infty)\times\RR^d) $, of the coefficient $b$ in \eqref{eq:DefinitionMimickingCoeff_b}, such that \eqref{eq:FormOfb_d} is satisfied. By \cite[Theorem 5.3.10]{Ethier_Kurtz},
there is a weak solution, $(\widehat X, \widehat W)$, $(\Omega, \sF, \PP)$, $(\sF_t)_{t\geq s}$,
to the stochastic differential equation
\[
d \widehat X = \tilde b(t, \widehat X(t)) dt + \tilde \sigma(t, \widehat X(t)) d \widehat W(t),\quad t>s,\quad \widehat X(s)=x.
\]
Because we assume $\widehat X(s)=x \in \bar \HH$, Proposition \ref{prop:LawSupport} shows that $\PP(\widehat X(t)\in\bar\HH)=1$, for all $t \geq s$, and so, we see that $(\widehat X, \widehat W)$ is also a weak solution to \eqref{eq:MimickingSDE}.
\end{proof}

We obtain the following consequence of Theorem \ref{thm:WeakExistence_SDE}

\begin{thm}[Existence of weak solutions to a martingale problem with continuous coefficients]
\label{thm:WeakExistence_martingale}
Assume that the coefficients $a$ and $b$ in \eqref{eq:MartingaleGenerator} are such that
\begin{align*}
&a \in C_{\loc}([0,\infty)\times\overline\HH; \SSS^d),
\\
&b \in C_{\loc}([0,\infty)\times\overline\HH; \RR^d).
\end{align*}
In addition, we assume that $a(t,x)$ is a positive definite matrix for every $(t,x)\in[0,\infty)\times\overline{\HH}$, that $b_d$ satisfies condition \eqref{eq:Boundary_cond_b}, and that $(x_da,b)$  satisfy condition \eqref{eq:LinearGrowth}. Then, for any $(s,x)\in[0,\infty)\times\overline{\HH}$, there is a solution, $\widehat \PP^{s,x}$, to the martingale problem associated to $\sA_t$ defined by \eqref{eq:MartingaleGenerator} such that \eqref{eq:Initial_Condition_Martingale_Problem} holds.
\end{thm}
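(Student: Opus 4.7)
The plan is to reduce the martingale problem for $\sA_t$ on the half-space to the existence of a weak solution to a stochastic differential equation of the form \eqref{eq:MimickingSDE}, and then invoke Theorem \ref{thm:WeakExistence_SDE}. The key observation is that a probability measure on $C_{\loc}([0,\infty);\overline{\HH})$ under which the process $M^v_t$ of Definition \ref{defn:Martingale_Problem} is a continuous martingale for every $v \in C^2_0(\overline{\HH})$ can be obtained as the law of a weak solution to an SDE whose diffusion coefficient $\sigma$ satisfies $\sigma\sigma^* = x_d a$.

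First, since $a \in C_{\loc}([0,\infty)\times\overline{\HH};\SSS^d)$ takes strictly positive definite values on the closed set $[0,\infty)\times\overline{\HH}$, the existence result \cite[Lemma 6.1.1]{FriedmanSDE} (invoked as in the regularity remark following Theorem \ref{thm:MainExistenceUniquenessMartProb}) produces a matrix-valued function $\varsigma \in C_{\loc}([0,\infty)\times\overline{\HH};\RR^{d\times d})$ such that $\varsigma\varsigma^* = a$ on $[0,\infty)\times\overline{\HH}$. Define
\[
\sigma(t,x) := \sqrt{x_d}\,\varsigma(t,x), \qquad (t,x) \in [0,\infty)\times\overline{\HH}.
\]
Then $\sigma \in C_{\loc}([0,\infty)\times\overline{\HH};\RR^{d\times d})$, it satisfies $\sigma(t,x)=0$ for all $(t,x)\in[0,\infty)\times\partial\HH$ (so \eqref{eq:Boundary_cond_sigma} holds), and $\sigma\sigma^* = x_d a$. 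The hypothesis that $(x_d a,b)$ obeys \eqref{eq:LinearGrowth} gives at once the linear-growth estimate \eqref{eq:Linear_growth_unif_time} for $(\sigma\sigma^*,b)$, and the assumption on $b_d$ supplies \eqref{eq:Boundary_cond_b}.

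Next I would apply Theorem \ref{thm:WeakExistence_SDE}, with initial condition $(s,x)\in[0,\infty)\times\overline{\HH}$, to produce a weak solution $(\widehat X,\widehat W)$, $(\Omega,\sF,\PP)$, $\{\sF_t\}_{t\geq s}$, to \eqref{eq:MimickingSDE} with $\widehat X(s)=x$, $\PP$-a.s. Extend the trajectory for $t\in[0,s]$ by setting $\widehat X(t):=x$ (which is $\overline{\HH}$-valued and continuous in $t$, matching the initial-value requirement \eqref{eq:Initial_Condition_Martingale_Problem}), and let $\widehat\PP^{s,x}$ be the induced law on $\bigl(C_{\loc}([0,\infty);\overline{\HH}),\sB(C_{\loc}([0,\infty);\overline{\HH}))\bigr)$ of the coordinate process $\omega\mapsto\omega(t)$ under the map $t\mapsto \widehat X(t)$.

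Finally I would verify the martingale property: for any $v\in C^2_0(\overline{\HH})$, It\^o's formula applied to $v(\widehat X(t))$ (valid because $v$ has a $C^2$ extension to a neighborhood of $\overline{\HH}$) yields, for $t\geq s$,
\[
v(\widehat X(t))-v(\widehat X(s)) = \int_s^t \sA_u v(\widehat X(u))\,du + \int_s^t \nabla v(\widehat X(u))^{*}\sigma(u,\widehat X(u))\,d\widehat W(u),
\]
after using $\sigma\sigma^{*}=x_d a$ to identify the second-order term with $\sA_u v$. The stochastic integral is a continuous local martingale with respect to $\{\sF_t\}$ (hence with respect to the augmented, right-continuous filtration $\sG_{t+}$ of Definition \ref{defn:Martingale_Problem}, by the invariance of the martingale property under law and Remark \ref{rmk:Reduction_to_usual_filtration}); compact support of $v$ and the linear growth of $\sigma\sigma^*$ make the integrand bounded on the compact set $\supp(v)$, so the local martingale is in fact a true martingale. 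Thus $M^v_t$ is a continuous $\widehat\PP^{s,x}$-martingale for every $v\in C^2_0(\overline{\HH})$.

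The only non-routine point is producing the continuous square root $\varsigma$ of $a$ on $[0,\infty)\times\overline{\HH}$; once the strict positive-definiteness of $a$ is used to supply this via \cite[Lemma 6.1.1]{FriedmanSDE}, the rest of the argument is a standard translation between weak SDE solutions and martingale problem solutions, combined with Theorem \ref{thm:WeakExistence_SDE} and Proposition \ref{prop:LawSupport} (the latter ensures the constructed process indeed stays in $\overline{\HH}$).
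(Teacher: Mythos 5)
Your proposal is correct and follows essentially the same route as the paper: construct the continuous square root $\varsigma$ of $a$ via \cite[Lemma 6.1.1]{FriedmanSDE}, set $\sigma=\sqrt{x_d}\,\varsigma$, verify the hypotheses of Theorem \ref{thm:WeakExistence_SDE} to obtain a weak solution, and take $\widehat\PP^{s,x}$ to be its law. The only difference is that where the paper cites \cite[Problem 5.4.3]{KaratzasShreve1991} for the passage from a weak SDE solution to a solution of the martingale problem, you carry out that standard It\^o-formula verification explicitly, which is the content of the cited problem.
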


\begin{proof}
Because $a$ is a positive definite matrix and it is continuous, by \cite[Lemma 6.1.1]{FriedmanSDE}, we may choose $\varsigma\in C_{\loc}([0,\infty)\times\overline\HH;\RR^{d\times d})$ such that \eqref{eq:Definition_a} holds. Now, by defining
\begin{equation}
\label{eq:Definition_sigma}
\sigma(t,x) := \sqrt{x_d} \varsigma(t,x),\quad\forall\, (t,x)\in [0,\infty)\times\overline\HH,
\end{equation}
we see that \eqref{eq:FormOfDiffusionMatrix} and \eqref{eq:Boundary_cond_sigma} hold. Since
\begin{equation}
\label{eq:sigma_sigma*_equalto_xda}
\sigma(t,x)\sigma^*(t,x) = x_da(t,x), \quad\forall\, (t,x)\in [0,\infty)\times\overline\HH,
\end{equation}
we see that \eqref{eq:Linear_growth_unif_time} is also obeyed (because of \eqref{eq:LinearGrowth}) and hence that the hypotheses on $(\sigma,b)$ in the statement of Theorem \ref{thm:WeakExistence_SDE} are satisfied. Let $\widehat \PP^{s,x}$ be the probability measure on
$$
(C_{\loc}([0,\infty);\overline\HH), \sB(C_{\loc}([0,\infty);\overline\HH)))
$$
induced by the weak solution $\widehat X$ to \eqref{eq:MimickingSDE} with coefficients $(\sigma,b)$ and initial condition $(s,x)$, given by Theorem \ref{thm:WeakExistence_SDE}. Then, \cite[Problem 5.4.3]{KaratzasShreve1991} implies that $\widehat \PP^{s,x}$ is a solution to the martingale problem associated to $\sA_t$ and satisfies \eqref{eq:Initial_Condition_Martingale_Problem}.
\end{proof}

\subsection{Uniqueness of solutions to the martingale problem and of weak solutions to the stochastic differential equation}
\label{subsec:UniquenessSDE}
We show that uniqueness in the sense of probability law holds for the weak solutions of the stochastic differential equation \eqref{eq:MimickingSDE}, with initial condition $x \in \overline\HH$, and we establish the well-posedness of the martingale problem associated to \eqref{eq:MimickingSDE}. First, we prove that uniqueness of the one-dimensional marginal distributions holds for weak solutions to \eqref{eq:MimickingSDE}, and then the analogue of \cite[Proposition 5.4.27]{KaratzasShreve1991} is used to show that uniqueness in law of solutions also holds.

We begin with the following version of the standard It\^o's formula (compare \cite[Theorem 3.3.6]{KaratzasShreve1991}) which applies to It\^o processes which are solutions to \eqref{eq:MimickingSDE}. The standard It\^o's formula cannot be applied directly to a solution $v$ to the terminal value problem \eqref{eq:MarginalUniqueness3} because the function $v$ is not $C^2$ up to the boundary (we only know that $x_d D^2 v$ is continuous up to the boundary). So we shall apply the standard It\^o's formula to the
shifted processes $\widehat X^{\eps}$ defined as in \eqref{eq:Shifted_process}, let $\eps$ tend to zero, and use condition \eqref{eq:CondDerivatives} to prove the appropriate convergence. An alternative proof of Proposition \ref{prop:ItoLemma} which does not appeal to \cite[Theorem IV.2.32]{Protter} is given in the proof of \cite[Proposition 3.4]{Feehan_Pop_mimickingdegen_probability_v1}, in an earlier version of this article.

\begin{prop}[It\^o's formula for functions which are not $C^2$ up to the boundary]
\label{prop:ItoLemma}
Assume that the coefficients $\sigma$ and $b$ of \eqref{eq:MimickingSDE} are continuous on $[0,\infty)\times\overline\HH$, that $\sigma$ obeys condition \eqref{eq:FormOfDiffusionMatrix}, the coefficient matrix $a$ defined by \eqref{eq:Definition_a} is continuous on $[0,T)\times\overline\HH$, and $\sigma\sigma^*$ and $b$ satisfy \eqref{eq:Linear_growth_unif_time}. Let $v \in C_{\loc}([0,\infty)\times\overline\HH)$ satisfy
\begin{align}
\label{eq:CondDerivatives}
& v_t, v_{x_i}, x_d v_{x_ix_j} \in C_{\loc}([0,\infty)\times\overline\HH), \quad\hbox{for } 1\leq i,j \leq d.
\end{align}
Let $(\widehat X, \widehat W)$ be a weak solution to \eqref{eq:MimickingSDE} on a filtered probability space $(\Omega,\PP,\sF)$, $\{\sF_t\}_{t\geq 0}$, such that $\widehat X(0) \in \overline\HH$, $\PP$-a.s. Then, the following holds $\PP$-a.s., for all
$s\leq t\leq T$,
\begin{equation}
\label{eq:ItoLemma}
\begin{aligned}
v(t,\widehat X(t))
&= v(s,\widehat X(s)) + \int_s^t \sum_{i,j=1}^d \sigma_{ij}(u,\widehat X(u)) v_{x_j} (u,\widehat X(u)) \,d \widehat W_j(u)\\
&\quad + \int_s^t\left(v_t(u,\widehat X(u))+\sum_{i=1}^d  b_i(u,\widehat X(u)) v_{x_i} (u,\widehat X(u)) \right.\\
&\quad\left.+\sum_{i,j=1}^d \frac{1}{2}\widehat X_d(u) a_{ij}(u,\widehat X(u)) v_{x_ix_j} (u,\widehat X(u)) \right) \,d u.
\end{aligned}
\end{equation}
\end{prop}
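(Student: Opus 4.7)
The plan is to apply the standard It\^o formula to a smooth approximation $v^\eps$ of $v$ obtained by shifting the $x_d$-coordinate away from the boundary, and then pass to the limit $\eps\downarrow 0$. For each $\eps>0$, define $v^\eps(u,x) := v(u, x', x_d + \eps)$. The hypothesis that $x_d v_{x_ix_j}$ is continuous on $[0,\infty)\times\overline\HH$ forces $v_{x_ix_j}$ to be continuous on the open half-space $\HH$ (just divide by $x_d>0$), and hence $v^\eps \in C^{1,2}_{\loc}([0,\infty)\times\overline\HH)$. Introducing the localizing stopping time $\tau_N := \inf\{t\geq s : |\widehat X(t)|\geq N\}$, which tends to $\infty$ almost surely by the linear growth of $(\sigma,b)$ and standard non-explosion estimates, I apply the standard It\^o formula \cite[Theorem 3.3.3]{KaratzasShreve1991} to $v^\eps(\cdot, \widehat X(\cdot))$ on $[s, t\wedge\tau_N]$, obtaining the analogue of \eqref{eq:ItoLemma} with $v$ replaced by $v^\eps$.

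For the passage to the limit, the quantities $v^\eps$, $v^\eps_t$, $v^\eps_{x_i}$ converge uniformly on compact subsets of $[0,\infty)\times\overline\HH$ to $v$, $v_t$, $v_{x_i}$ by continuity of these derivatives up to the boundary. Combined with boundedness on the stopped paths, dominated convergence and It\^o's isometry deliver convergence of the drift and stochastic integral terms, respectively.

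The main obstacle is the second-order term. Using \eqref{eq:FormOfDiffusionMatrix} and \eqref{eq:Definition_a}, and setting $w_{ij}(u,x):=x_d v_{x_ix_j}(u,x)$ (continuous on $[0,\infty)\times\overline\HH$ by hypothesis), the integrand of this term equals
\begin{equation*}
\tfrac{1}{2} a_{ij}(u,\widehat X(u))\,\frac{\widehat X_d(u)}{\widehat X_d(u) + \eps}\, w_{ij}(u, \widehat X(u)', \widehat X_d(u) + \eps).
\end{equation*}
The factor $\widehat X_d/(\widehat X_d + \eps)$ is dominated by $1$ and the second factor converges uniformly on compacts to $w_{ij}(u,\widehat X(u))$, so dominated convergence yields convergence of the integral to $\int_s^{t\wedge\tau_N} \tfrac{1}{2} a_{ij}w_{ij}(u,\widehat X(u))\,du$, except for a possible defect coming from $\{u : \widehat X_d(u)=0\}$ where the pointwise limit is $0$ rather than $a_{ij}w_{ij}$. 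Since on the right-hand side of \eqref{eq:ItoLemma} the quantity $\widehat X_d(u)v_{x_ix_j}(u,\widehat X(u))$ is naturally interpreted as the continuous function $w_{ij}(u,\widehat X(u))$, the remaining work is to show that this defect contributes zero to the integral; this is the delicate point, to be settled by an occupation-time analysis of the nonnegative continuous semimartingale $\widehat X_d$ (whose quadratic variation $d\langle\widehat X_d\rangle_u = \widehat X_d(u) a_{dd}(u,\widehat X(u))\,du$ vanishes precisely on $\{\widehat X_d=0\}$), together with the SDE structure of $\widehat X_d$. Letting $N\to\infty$ and invoking non-explosion of $\widehat X$ then delivers \eqref{eq:ItoLemma} on $[s,t]$.
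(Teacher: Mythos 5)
Your construction is the same as the paper's: shift the $x_d$-coordinate by $\eps$, apply the classical It\^o formula to the resulting $C^{1,2}_{\loc}$ function, localize with the exit times $\tau_N$ (justified by the linear-growth moment bound), and pass to the limit using continuity up to $\partial\HH$ of $v_t$, $v_{x_i}$ and of $w_{ij}:=x_dv_{x_ix_j}$, with exactly the factorization $\frac{\widehat X_d}{\widehat X_d+\eps}\,a_{ij}\,w_{ij}(u,\widehat X^\eps(u))$ for the second-order term. (Your use of the It\^o isometry plus bounded convergence for the stochastic integrals is interchangeable with the paper's appeal to the dominated convergence theorem for stochastic integrals.)

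The divergence — and the gap — is your treatment of the set $\{u:\widehat X_d(u)=0\}$. You correctly observe that there the pointwise limit of the approximating integrand is $0$, but you then defer the matter to an unexecuted ``occupation-time analysis'' intended to show this set contributes nothing to the $du$-integral. First, this is the delicate step of the whole proposition and cannot be left as a promissory note. Second, the proposed tool does not work at the stated level of generality: the proposition assumes only continuity, the degeneracy structure \eqref{eq:FormOfDiffusionMatrix}, and linear growth, so the Lebesgue measure of $\{u\le t:\widehat X_d(u)=0\}$ need \emph{not} vanish — e.g.\ with $b\equiv 0$ and $\widehat X(0)\in\partial\HH$ the solution $\widehat X\equiv \widehat X(0)$ sits on the boundary for all time, and the vanishing of $d\langle\widehat X_d\rangle$ on $\{\widehat X_d=0\}$ gives no information through the occupation-time formula. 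The resolution is not a measure-zero argument but the observation that there is no defect at all: the integrand in \eqref{eq:ItoLemma} is the product $\widehat X_d(u)\,a_{ij}\,v_{x_ix_j}(u,\widehat X(u))$, i.e.\ the boundary value of the continuous function $a_{ij}w_{ij}$, and this boundary value is itself $0$ (for instance, if $x_dv_{x_ix_d}\to L\neq 0$ as $x_d\downarrow 0$ then $v_{x_i}$ would diverge logarithmically, contradicting \eqref{eq:CondDerivatives}), so the pointwise limit agrees with the target integrand for \emph{every} $u$, and bounded convergence finishes the argument exactly as in the nondegenerate terms. You should replace the occupation-time step with this direct identification of the limit on $\{\widehat X_d(u)=0\}$.
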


\begin{proof}
Without loss of generality, we may assume $s=0$ and it is sufficient prove \eqref{eq:ItoLemma} for $s=0$. We choose $\eps \geq 0$ and let
\begin{equation}
\begin{aligned}
x^{\eps} &:= (x_1,\ldots, x_{d-1}, x_d+\eps),
\\
\label{eq:Shifted_process}
\widehat X^{\eps}(u) &:= \left(\widehat X_{1}(u),\ldots, \widehat X_{d-1}(u), \widehat X_{d}(u)+\eps\right), \quad \forall\, u \geq 0.
\end{aligned}
\end{equation}
The proof follows by applying the standard It\^o's formula, \cite[Theorem 3.3.6]{KaratzasShreve1991}, to the processes
$\widehat X^{\eps}$ defined as in \eqref{eq:Shifted_process}, for $\eps>0$, and taking limit as $\eps$ tends to zero. This will require the use of condition \eqref{eq:CondDerivatives}. Consider the stopping times
\begin{equation*}
\tau_n := \inf\left\{u \geq 0: |\widehat X(u)|\geq n \right\} \quad \forall\, n \geq 1.
\end{equation*}
Since the coefficients $\sigma\sigma^*$ and $b$ satisfy \eqref{eq:Linear_growth_unif_time}, we obtain by \cite[Problem 5.3.15]{KaratzasShreve1991}, that for all $m \geq 1$ and $t \geq 0$, there is a positive constant $C=C(m,t,K,d)$ such that
\begin{align}
\label{eq:ItoLemma6}
\EE \left[ \max_{0\leq u\leq t} |\widehat X(u)|^{2m}\right] &\leq C\left(1+|x|^{2m}\right).
\end{align}
Then, it follows by \eqref{eq:ItoLemma6} that the non-decreasing sequence of stopping times $\{\tau_n\}_{n \geq 1}$ satisfies
\begin{equation}
\label{eq:StoppingTimesSequence}
\lim_{n \rightarrow \infty} \tau_n = +\infty \quad \PP\text{-a.s.}
\end{equation}
If this were not the case, then there would be a deterministic time $t > 0$ such that
\begin{equation}
\label{eq:ContradictionStoppingTimes}
\lim_{n \rightarrow\infty}\PP\left(\tau_n \leq t\right) >0.
\end{equation}
But, $\PP\left(\tau_n \leq t\right) = \PP\left(\sup_{0 \leq u \leq t}|\widehat X(u)| \geq n\right)$ and we have
\begin{align*}
\PP\left(\sup_{0 \leq u \leq t}|\widehat X(u)| \geq n\right)
&\leq \frac{1}{n^2} \EE \left[ \max_{0\leq u\leq t} |\widehat X(u)|^{2}\right]\\
&\leq \frac{C(1+|x|^2)}{n^2}, \quad \hbox{ (by \eqref{eq:ItoLemma6})}.
\end{align*}
Since the preceding expression converges to zero, as $n$ goes to $\infty$, we obtain a contradiction in \eqref{eq:ContradictionStoppingTimes}, and so \eqref{eq:StoppingTimesSequence} holds. By \eqref{eq:StoppingTimesSequence}, it suffices to prove \eqref{eq:ItoLemma} for the stopped process, that is
\begin{equation}
\label{eq:ItoLemmaStopped}
\begin{aligned}
v(t\wedge \tau_n,\widehat X(t\wedge \tau_n))
&= v(0,\widehat X(0)) + \int_0^{t\wedge \tau_n} \sum_{i,j=1}^d \sigma_{ij}(u,\widehat X(u)) v_{x_j} (u,\widehat X(u)) \,d \widehat W_j(u)\\
&\quad +
\int_0^{t\wedge \tau_n}\left(v_t(u,\widehat X(u))+\sum_{i=1}^d  b_i(u,\widehat X(u)) v_{x_i} (u,\widehat X(u)) \right.\\
&\quad\left.+\sum_{i,j=1}^d \frac{1}{2}\widehat X_d(u) a_{ij}(u,\widehat X(u)) v_{x_ix_j} (u,\widehat X(u)) \right) \,d u.
\end{aligned}
\end{equation}
By Proposition \ref{prop:LawSupport}, we have
\begin{equation}
\label{eq:ItoLemma4}
\widehat X(u) \in \overline \HH \quad \PP \text{-a.s.}\quad \forall\, u \in [0,T].
\end{equation}
Since $v\in C^{2}_{\loc}([0,T]\times\RR^{d-1}\times[\eps/2,\infty))$, we may extend $v$ to be a $C^{2}_{\loc}$ function on $[0,T]\times\RR^{d}$. Then we can apply the standard It\^o's formula, \cite[Theorem 3.3.6]{KaratzasShreve1991} and, taking into account the fact that $\widehat X_d(t) + \eps \geq \eps$, $\PP$-a.s., for all $t \geq 0$, we obtain
\begin{equation}
\label{eq:ItoLemma3}
\begin{aligned}
v(t\wedge \tau_n,\widehat X^{\eps}(t\wedge \tau_n)) &= v(0, \widehat X^{\eps}(0))+\int_0^{t\wedge \tau_n} \sum_{i,j=1}^d \sigma_{ij}(u,\widehat X(u)) v_{x_j}(u,\widehat X^{\eps}(u))\,d \widehat W_j(u)
\\
&\quad + \int_0^{t\wedge \tau_n}\left(v_t(u,\widehat X^{\eps}(u)) + \sum_{i=1}^d b_i(u,\widehat X(u)) v_{x_i}(u,\widehat X^{\eps}(u))\right.\\
&\quad \left.+\sum_{i,j=1}^d \frac{1}{2}\widehat X_d(u) a_{ij}(u,\widehat X(u))v_{x_ix_j}(u,\widehat X^{\eps}(u)) \right)\, du.
\end{aligned}
\end{equation}
Our goal is to show that, by taking the limit as $\eps \downarrow 0$, the left-hand and the right-hand side in
\eqref{eq:ItoLemma3} converge in probability to the corresponding expressions in \eqref{eq:ItoLemmaStopped}.

Since $v \in C_{\loc}(\overline\HH_T)$, we have for all $0\leq u\leq T$,
\begin{equation}
\label{eq:ItoLemma5}
v(u\wedge \tau_n, \widehat X^{\eps}(u\wedge \tau_n)) \rightarrow v(u\wedge \tau_n, \widehat X(u\wedge \tau_n)) \quad \PP \text{-a.s. when } \eps \downarrow 0.
\end{equation}
The terms in \eqref{eq:ItoLemma3} containing the pure It\^o integrals can be evaluated in the following way. We define, for all $i,j=1\ldots,d$ and $\eps \geq 0$,
$$
H^{\eps}_{ij}(u) := \sigma_{ij}(u,\widehat X(u)) v_{x_j}(u,\widehat X^{\eps}(u)) \mathbf{1}_{\{|\widehat X(u)| \leq n\}},\quad\forall\, u\in [0,T].
$$
Because $\sigma$ and $v_{x_j}$ are continuous on $[0,\infty)\times\overline\HH$ by \eqref{eq:CondDerivatives}, we see that the sequence $\{H^{\eps}_{ij}\}_{\eps \geq 0}$ is uniformly bounded on $[0,T]$, and converges $\PP$-a.s. to $H^0_{ij}$, for all $u \in [0,T]$. Then \cite[Theorem IV.2.32]{Protter} implies that, as $\eps\downarrow 0$,
\begin{align*}
\int_0^t H^{\eps}_{ij}(u)\,d \widehat W_j(u)
& \longrightarrow
\int_0^t H^0_{ij}(u)\,d \widehat W_j(u),
\end{align*}
where the convergence takes place in probability, as $\eps$ tends to zero. Using the fact that
$$
\int_0^{t\wedge \tau_n}  \sigma_{ij}(u,\widehat X(u)) v_{x_j}(u,\widehat X^{\eps}(u))\,d \widehat W_j(u)
=\int_0^t H^{\eps}_{ij}(u)\ d\widehat W_j(u),\quad\forall\, \eps \geq 0,
$$
we see that
\begin{equation}
\label{eq:Convergence_Ito_term}
\begin{aligned}
\int_0^{t\wedge \tau_n} \sigma_{ij}(u,\widehat X(u)) v_{x_j}(u,\widehat X^{\eps}(u))\,d \widehat W_j(u)
&\longrightarrow
\int_0^{t\wedge \tau_n} \sigma_{ij}(u,\widehat X(u)) v_{x_j}(u,\widehat X(u))\,d \widehat W_j(u),
\end{aligned}
\end{equation}
where the convergence takes place in probability as $\eps$ tends to zero.

For the $du$-term on the right-hand side of \eqref{eq:ItoLemmaStopped} we consider, for all $u\in [0,T]$ and all $\eps \geq 0$,
\begin{align*}
G^{\eps}(u) &:=
\left(v_t(u,\widehat X^{\eps}(u)) + \sum_{i=1}^d b_i(u,\widehat X(u)) v_{x_i}(u,\widehat X^{\eps}(u))\right.\\
&\qquad\left. +\sum_{i,j=1}^d \frac{1}{2}\widehat X_d(u) a_{ij}(u,\widehat X(u))v_{x_ix_j}(u,\widehat X^{\eps}(u))\right) \mathbf{1}_{\{|\widehat X(u)|\leq n\}}.
\end{align*}
Using again the fact that $a_{ij}$ and $x_d v_{x_ix_j}$ are continuous on $[0,\infty)\times\overline\HH$ and the fact that $\widehat X$ has continuous paths, by writing
\begin{align*}
\widehat X_d(u) a_{ij}(u,\widehat X(u))v_{x_ix_j}(u,\widehat X^{\eps}(u))
&= \frac{\widehat X_d(u)}{\widehat X^{\eps}_d(u)} a_{ij}(u,\widehat X(u))\widehat X^{\eps}_d(u)v_{x_ix_j}(u,\widehat X^{\eps}(u)),
\end{align*}
we see that, as $\eps\downarrow 0$,
\begin{align*}
\widehat X_d(u) a_{ij}(u,\widehat X(u))v_{x_ix_j}(u,\widehat X^{\eps}(u))
\longrightarrow
 a_{ij}(u,\widehat X(u))\widehat X_d(u)v_{x_ix_j}(u,\widehat X(u)),
\end{align*}
where the convergence takes place $\PP$-a.s., for all $u\in [0,T]$, as $\eps\downarrow 0$. Combining this with the fact that $b_i$, $v_t$, and $v_{x_i}$ are continuous on $[0,\infty)\times\overline\HH$ by \eqref{eq:CondDerivatives}, we see that the sequence $\{G^{\eps}\}_{\eps \geq 0}$ is uniformly bounded on $[0,T]$, and converges $\PP$-a.s. to $G^0$, for all $u \in [0,T]$. Then \cite[Theorem IV.2.32]{Protter} implies that
\begin{equation}
\label{eq:G_integral_convergence}
\int_0^t G^{\eps}(u)\ du \longrightarrow \int_0^t G^0(u)\ du
\quad\hbox{in probability as } \eps \downarrow 0.
\end{equation}
The convergence in \eqref{eq:G_integral_convergence} is equivalent to the fact that the $du$-term on the right-hand side of \eqref{eq:ItoLemmaStopped} converges in probability to the $du$-term on the right-hand side of \eqref{eq:ItoLemma}. By combining the latter convergence in probability with \eqref{eq:ItoLemma5} and \eqref{eq:Convergence_Ito_term}, we find that the right-hand side of \eqref{eq:ItoLemma3} converges in probability to the right-hand side in \eqref{eq:ItoLemmaStopped}, as $\eps$ tends to zero. This concludes the proof of the proposition.
\end{proof}

The next result is based on the \emph{existence} of a solution in $\sC^{2+\alpha}(\overline\HH_T)$ to the homogeneous initial value problem considered in Theorem \ref{thm:MainExistenceUniquenessPDE}. While it is not important in the proof of Proposition \ref{prop:MarginalsUniqueness} that the solution to problem \eqref{eq:Problem} is unique, it is important that a solution is
is smooth up to the boundary $\partial\HH$ in the sense of \eqref{eq:CondDerivatives}.

\begin{prop} [Uniqueness of the one-dimensional marginal distributions]
\label{prop:MarginalsUniqueness}
Assume the hypotheses of Theorem \ref{thm:MainWeakExistenceUniquenessSDE} hold. Let $(\widehat X^k, \widehat W^k)$, defined on filtered probability spaces $(\Omega^k,\PP^k,\sF^k)$, $\{\sF^k_t\}_{t\geq 0}$, $k=1,2$, be two weak solutions to \eqref{eq:MimickingSDE} with initial condition  $(s,x) \in [0,\infty)\times\overline{\HH}$. Then the one-dimensional marginal probability distributions of $\widehat X^1(t)$ and $\widehat X^2(t)$ agree for each $t\geq s$.
\end{prop}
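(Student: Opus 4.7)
The strategy is to characterize the one-dimensional marginal $\mathbb{E}[g(\widehat X^k(T))]$ for $T>s$ and $g\in C^\infty_0(\overline\HH)$ as a quantity depending only on $(s,x)$, $g$, and the coefficients $(a,b)$, so that it must agree for the two weak solutions. The vehicle is Proposition \ref{prop:ItoLemma} applied to a solution of a backward Cauchy problem associated to $-\partial_t+\sA_t$, whose existence is furnished by Theorem \ref{thm:MainExistenceUniquenessPDE}.

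Fix $T>s$ and $g\in C^\infty_0(\overline\HH)\subset\sC^{2+\alpha}_p(\overline\HH)$. The plan is to construct $v\in\sC^{2+\alpha}(\overline\HH_T)$ satisfying
\begin{equation*}
v_t+\sA_t v=0 \quad\text{on } (s,T)\times\HH, \qquad v(T,\cdot)=g \quad\text{on } \overline\HH.
\end{equation*}
Via the time reversal $w(\tau,x):=v(T-\tau,x)$ for $\tau\in[0,T-s]$, this is equivalent to the forward initial value problem
\begin{equation*}
w_\tau -\tilde\sA_\tau w = 0, \qquad w(0,\cdot)=g,
\end{equation*}
where the time-reversed coefficients $\tilde a(\tau,x):=a(T-\tau,x)$, $\tilde b(\tau,x):=b(T-\tau,x)$ still satisfy Assumption \ref{assump:Coeff}. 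Theorem \ref{thm:MainExistenceUniquenessPDE} (applied with $c\equiv 0$, $f\equiv 0$) then yields a unique solution $w\in\sC^{2+\alpha}(\overline\HH_{T-s})$, and hence $v\in\sC^{2+\alpha}(\overline\HH_T)$. In particular, $v$, $v_t$, $v_{x_i}$ and $x_d v_{x_ix_j}$ are bounded and continuous on $[s,T]\times\overline\HH$, so the regularity hypothesis \eqref{eq:CondDerivatives} of Proposition \ref{prop:ItoLemma} is met.

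Applying Proposition \ref{prop:ItoLemma} to $v(t,\widehat X^k(t))$ on $[s,T]$ and using $v_t+\sA_t v=0$ to cancel the drift integral, one obtains
\begin{equation*}
v(T,\widehat X^k(T)) = v(s,x) + \int_s^T \sum_{i,j=1}^d \sigma_{ij}(u,\widehat X^k(u))\, v_{x_j}(u,\widehat X^k(u))\, d\widehat W^k_j(u).
\end{equation*}
Since $v_{x_j}$ is bounded on $[s,T]\times\overline\HH$ and, by Assumption \ref{assump:MimickingCoeffSpecialForm} together with \eqref{eq:LinearGrowth}, $|\sigma(u,y)|^2=\tr(y_d\,a(u,y))\leq K(1+|y|)$, the moment bound \eqref{eq:ItoLemma6} (which applies to both $\widehat X^k$) shows the integrand is square-integrable on $[s,T]\times\Omega^k$; hence the stochastic integral is a true martingale with zero expectation. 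Taking expectations yields
\begin{equation*}
\mathbb{E}^{\PP^k}\!\left[g(\widehat X^k(T))\right] = v(s,x), \qquad k=1,2,
\end{equation*}
which is the same quantity for both solutions. As $g\in C^\infty_0(\overline\HH)$ was arbitrary and this class is measure-determining on $\overline\HH$, the laws of $\widehat X^1(T)$ and $\widehat X^2(T)$ coincide for every $T\geq s$. The main subtlety is simply verifying that $v$ has precisely the regularity required to apply Proposition \ref{prop:ItoLemma} and that the resulting stochastic integral is a genuine martingale; both are handled by the $\sC^{2+\alpha}(\overline\HH_T)$ bound from Theorem \ref{thm:MainExistenceUniquenessPDE} together with the growth control on $\sigma\sigma^*$.
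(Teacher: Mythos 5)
Your proposal is correct and follows essentially the same route as the paper: solve the time-reversed Cauchy problem via Theorem \ref{thm:MainExistenceUniquenessPDE} to get $v\in\sC^{2+\alpha}(\overline\HH_T)$ with $v_t+\sA_t v=0$ and $v(T,\cdot)=g$, apply the boundary-adapted It\^o formula of Proposition \ref{prop:ItoLemma}, and kill the stochastic integral using the moment bound \eqref{eq:ItoLemma6} together with the linear growth of $\sigma\sigma^*$. The only cosmetic difference is that the paper first reduces to $s=0$ and cites Proposition \ref{prop:LawSupport} explicitly to justify that test functions supported in $\overline\HH$ suffice, whereas you fold that into the remark that $C^\infty_0(\overline\HH)$ is measure-determining for laws supported on $\overline\HH$.
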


\begin{proof}
We apply a duality argument as in the proof of \cite[Lemma 5.4.26]{KaratzasShreve1991} with the aid of Theorem \ref{thm:MainExistenceUniquenessPDE} and Proposition \ref{prop:ItoLemma}
but indicate the differences in the proof provided here. Without loss of generality, we may assume that $s=0$. By Proposition \ref{prop:LawSupport}, it is enough to show that for any $T>0$ and $g \in C^{\infty}_0(\overline{\HH})$, we have
\begin{equation}
\label{eq:MarginalUniqueness1}
\EE_{\PP^1}\left[g(\widehat X^1(T))\right]  = \EE_{\PP^2}\left[g(\widehat X^2(T))\right],
\end{equation}
where each expectation is taken under the law of the corresponding process. For this purpose, we consider the parabolic differential operator,
\begin{equation}
\label{eq:MatchingMarginals2_1}
-\mathring{L} w(t,x) := - w_t(t,x) + \sum_{i=1}^d  b_i(T-t,x) w_{x_i}(t,x)+\sum_{i,j=1}^d \frac{1}{2} x_d a_{ij}(T-t,x) w_{x_ix_j}(t,x),
\end{equation}
for all $(t,x) \in \HH_T$ and $w \in C^{2}(\HH_T)$. Let $u\in \sC^{2+\alpha}(\overline{\HH}_T)$ be the unique solution given by Theorem \ref{thm:MainExistenceUniquenessPDE} to the homogeneous initial value problem,
\begin{equation}
\label{eq:MatchingMarginals2_2}
\begin{cases}
\mathring{L} (t,x) = 0, &  \mbox{ for } (t,x) \in (0,T)\times\HH,\\
u(0,x) = g(x), & \mbox{ for } x \in   \overline{\HH}.
\end{cases}
\end{equation}
Define
\begin{equation}
\label{eq:MatchingMarginals2}
v(t,x):=u(T-t,x), \quad \forall\, (t,x)\in[0,T]\times\overline{\HH}.
\end{equation}
Then, $v\in \sC^{2+\alpha}(\overline{\HH}_T)$ solves the terminal value problem,
\begin{equation}
\label{eq:MarginalUniqueness3}
\begin{cases}
v_t(t,x)+\sA_t v(t,x)= 0, & \mbox{ for } (t,x)\in(0,T)\times\HH,\\
v(T,x) = g(x), & \mbox{ for } x\in\overline{\HH},
\end{cases}
\end{equation}
where the differential operator $\sA_t$ is given by \eqref{eq:MartingaleGenerator}.
Moreover, any function $v$ belonging to $\sC^{2+\alpha}(\overline{\HH}_T)$ obeys the boundary regularity property \eqref{eq:CondDerivatives} by the definition of $\sC^{2+\alpha}(\overline{\HH}_T)$ in \S \ref{subsec:DHKHolderSpaces}.
Therefore, Proposition \ref{prop:ItoLemma} gives us, for $k=1,2$,
\begin{equation}
\label{eq:MarginalUniqueness17}
\begin{aligned}
\EE_{\PP^k}\left[ v (T, \widehat X^k(T)) \right]
& = v(0,x) + \EE_{\PP^k}\left[\int_0^T \left(v_t+\sA_t \right)v (t, \widehat X^k(t)) \,dt\right]\\
&\quad+\EE_{\PP^k}\left[\int_0^T \sum_{i,j=1}^d \sigma_{ij}(t,\widehat X^k(t)) v_{x_j}(t, \widehat X^k(t)) \,d\widehat W^k_j(t)\right].
\end{aligned}
\end{equation}
Recall that $v_{x_i} \in C(\overline\HH_T)$ and that the coefficients $\sigma_{ij}$ satisfy \eqref{eq:LinearGrowth}. Inequality \eqref{eq:ItoLemma6} applied with $m=1$, gives
\[
\EE_{\PP^k}\left[\int_0^T \left|\sigma_{ij}(t,\widehat X^k(t)) v_{x_j}(t, \widehat X^k(t))\right|^2 \,d t\right] \leq C (1+|x|^2)\|v_{x_i}\|^2_{C(\overline\HH_T)},
\]
and so, the It\^o integrals in \eqref{eq:MarginalUniqueness17} are square-integrable, continuous martingales, which implies
\[
\EE_{\PP^k}\left[\int_0^T \sigma_{ij}(t,\widehat X^k(t)) v_{x_j}(t, \widehat X^k(t)) \,d\widehat W^k_j(t)\right] =0.
\]
Using the preceding inequality and \eqref{eq:MarginalUniqueness3}, we see that \eqref{eq:MarginalUniqueness17} yields
\begin{equation}
\label{eq:MatchingMarginals3}
\EE_{\PP^k}\left[g(\widehat X^k(T))\right] = v(0,x), \quad k=1,2,
\end{equation}
and so the equality \eqref{eq:MarginalUniqueness1} follows.
\end{proof}

Next, we recall

\begin{prop} [Uniqueness of solutions to the classical martingale problem]
\label{prop:KSLawUniqueness}
\cite[Proposition 5.4.27]{KaratzasShreve1991}
\cite[Theorem 4.4.2 \& Corollary 4.4.3]{Ethier_Kurtz}.
Let
\begin{align*}
& \tilde b:\RR^{d}\rightarrow \RR^d,\\
& \tilde \sigma:\RR^{d}\rightarrow \RR^{d \times d},
\end{align*}
be Borel measurable functions that are bounded on each compact subset in $\RR^d$. Define a differential operator by
\begin{align*}
\sG u(x) &:=  \sum_{i=1}^d \tilde b_i(x) u_{x_i} + \sum_{i,j=1}^d \frac{1}{2} \tilde a_{ij}(x) u_{x_ix_j}, \quad \forall\, x \in \RR^d,
\end{align*}
where $\tilde a:=\tilde\sigma\tilde\sigma^*$ and $u \in C^{2}(\RR^d)$. Suppose that for every $x \in \RR^d$, any two solutions $\PP^x$ and $\QQ^x$ to the time-homogeneous martingale problem associated with $\sG$ have the same one-dimensional marginal distributions. Then, for every initial condition $x \in \RR^d$, there exists at most one solution to the time-homogeneous martingale problem associated to $\sG$.
\end{prop}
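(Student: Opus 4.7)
The plan is to follow the classical Stroock--Varadhan strategy for reducing uniqueness in law to uniqueness of one-dimensional marginals, via regular conditional probabilities and a monotone class / induction argument on finite-dimensional cylinder sets.

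First, I would reduce the problem to showing that any two solutions $\PP^x$ and $\QQ^x$ assign the same measure to every finite-dimensional cylinder set of the form
\[
C = \{\omega\in C_{\loc}([0,\infty);\RR^d) : \omega(t_1)\in A_1,\ldots,\omega(t_n)\in A_n\},
\]
for $0 \leq t_1 < \cdots < t_n$ and Borel sets $A_i\subset\RR^d$. Since the Borel $\sigma$-algebra on $C_{\loc}([0,\infty);\RR^d)$ is generated by cylinder sets and the collection of cylinder sets is a $\pi$-system, agreement on cylinder sets extends to the full $\sigma$-algebra by Dynkin's theorem. I would then proceed by induction on $n$, with the case $n=1$ being exactly the hypothesis that the one-dimensional marginals agree.

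For the inductive step, suppose agreement holds up to $n$ time points, and consider $n+1$ time points $0\leq t_1<\cdots<t_n<t_{n+1}$ with bounded continuous test functions $f_1,\ldots,f_{n+1}$. The key tool is the \emph{restriction property} of the martingale problem: for $\PP^x$-a.e.\ $\omega$, the regular conditional probability $\PP^x(\,\cdot\,\mid\sF_{t_n})(\omega)$, after shifting time by $t_n$ and using time-homogeneity of $\sG$, is itself a solution to the martingale problem starting at $\omega(t_n)$. This is a standard consequence of the Stroock--Varadhan framework, using the fact that martingality of $M^v_t$ under $\PP^x$ transfers (via an optional sampling argument combined with a measurable selection of conditional probabilities, cf.\ \cite[Theorem 6.1.3]{Stroock_Varadhan} or \cite[Theorem 4.4.6]{Ethier_Kurtz}) to martingality of the time-shifted process under the conditional law. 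The same holds for $\QQ^x$.

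Using this restriction property, the hypothesis of matching one-dimensional marginals implies that for every bounded Borel $h$,
\[
\EE_{\PP^x}\!\left[h(X(t_{n+1}))\mid\sF_{t_n}\right](\omega) \;=\; \Phi(\omega(t_n)) \;=\; \EE_{\QQ^x}\!\left[h(X(t_{n+1}))\mid\sF_{t_n}\right](\omega),
\]
where $\Phi(y) := \EE_{\RR^y}[h(X(t_{n+1}-t_n))]$ is independent of the choice of solution $\RR^y$ starting at $y$, since any two such solutions share their (unique) one-dimensional distribution at time $t_{n+1}-t_n$ by assumption. Multiplying by $\prod_{i=1}^{n} f_i(X(t_i))$, taking expectations, and invoking the inductive hypothesis (applied to the $\sF_{t_n}$-measurable function $\Phi(X(t_n))\prod_{i=1}^{n}f_i(X(t_i))$) yields
\[
\EE_{\PP^x}\!\left[\prod_{i=1}^{n+1}f_i(X(t_i))\right] \;=\; \EE_{\QQ^x}\!\left[\prod_{i=1}^{n+1}f_i(X(t_i))\right],
\]
completing the inductive step. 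The main technical obstacle is justifying the restriction property for the regular conditional probabilities in the generality of unbounded coefficients (bounded on compacts), which requires a careful localization argument via stopping times of the form $\tau_N := \inf\{t : |X(t)|\geq N\}$ so that the stopped test martingales $M^v_{t\wedge\tau_N}$ are genuinely bounded and the optional sampling step is legitimate; after that step, letting $N\to\infty$ and invoking continuity of paths recovers the full martingale property of the conditional law.
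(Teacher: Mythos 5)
The paper does not prove this proposition at all---it is recalled verbatim from \cite[Proposition 5.4.27]{KaratzasShreve1991} and \cite[Theorem 4.4.2 \& Corollary 4.4.3]{Ethier_Kurtz}---and your argument is exactly the standard conditioning-plus-induction proof given in those references: reduction to finite-dimensional cylinder sets, the restriction property of regular conditional probabilities combined with time-homogeneity, and the induction step using the hypothesized uniqueness of one-dimensional marginals. Your reconstruction is correct and matches the cited proof; the only point worth tightening is that instead of worrying about Borel measurability of $\Phi$ directly, one observes that the two conditional expectations are versions with respect to the law of $X(t_n)$, which is the \emph{same} measure under $\PP^x$ and $\QQ^x$ by the inductive hypothesis, so a single Borel version serves for both.
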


We have the following consequence of Propositions \ref{prop:MarginalsUniqueness} and \ref{prop:KSLawUniqueness}; the difference between \cite[Proposition 5.4.27]{KaratzasShreve1991} and our Corollary \ref{cor:UniqueSolutionMartingaleProblem} is that the coefficients in \cite[Proposition 5.4.27]{KaratzasShreve1991} do not depend on time, while our coefficients do depend on time. The proof of Corollary \ref{cor:UniqueSolutionMartingaleProblem} proceeds by reducing the problem to one which Proposition \ref{prop:KSLawUniqueness} may be applied.

\begin{cor}[Uniqueness of solutions to the martingale problem associated to $\sA_t$]
\label{cor:UniqueSolutionMartingaleProblem}
Assume that the hypotheses of Theorem \ref{thm:WeakExistence_martingale} are satisfied. Suppose that for every $x \in \overline\HH$ and $s \geq 0$, any two solutions $\PP^{s,x}$ and $\QQ^{s,x}$ to the martingale problem in Definition \ref{defn:Martingale_Problem} associated to $\sA_t$ in \eqref{eq:MartingaleGenerator} with initial condition $(s,x)$ have the same one-dimensional marginal distributions. Then, for every initial condition $(s,x) \in [0,\infty)\times\overline\HH$, there exists at most one solution to the martingale problem associated to $\sA_t$.
\end{cor}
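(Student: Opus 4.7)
The plan is to reduce the time-inhomogeneous martingale problem on the half-space to a time-homogeneous one by appending time to the state, so that Proposition~\ref{prop:KSLawUniqueness} (in its natural half-space analog) can be applied. Given any solution $\widehat\PP^{s,x}$ to the martingale problem associated to $\sA_t$ starting from $(s,x)$, define the space-time process $Y(t) := (s+t, \omega(s+t))$ for $t \geq 0$, taking values in the Polish space $[0,\infty) \times \overline\HH$. Using Definition~\ref{defn:Martingale_Problem} with test functions of the form $f(\tau,x)$, a direct computation shows that $Y$ solves the time-homogeneous martingale problem for
\begin{equation*}
\tilde\sG f(\tau, x) := f_\tau(\tau, x) + \sA_\tau f(\tau, x),
\end{equation*}
starting from $(s,x)$.

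The hypothesis on one-dimensional marginals transfers immediately: since the first coordinate of $Y(t)$ is the deterministic value $s+t$, the law of $Y(t)$ is the product of $\delta_{s+t}$ with the law of $\omega(s+t)$ under $\widehat\PP^{s,x}$, and the latter is uniquely determined by assumption. Consequently, the hypothesis of Proposition~\ref{prop:KSLawUniqueness} holds for $\tilde\sG$ acting on the space-time process $Y$. To apply Proposition~\ref{prop:KSLawUniqueness} verbatim (as it is stated for $\RR^d$), one extends the coefficients $\varsigma$ and $b$ from $[0,\infty) \times \overline\HH$ to $\RR \times \RR^d$ in such a way that the extended diffusion vanishes on $\{x_d < 0\}$ and the extended drift satisfies $\tilde b_d \geq 0$ there, following the recipe in Remark~\ref{rmk:LawSupport} and the proof of Theorem~\ref{thm:WeakExistence_SDE}. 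By Proposition~\ref{prop:LawSupport}, any solution of the extended space-time martingale problem started at a point of $[0,\infty) \times \overline\HH$ remains in $[0,\infty) \times \overline\HH$ almost surely, so the auxiliary extension does not affect uniqueness in law. Conclude that the law of $Y$ is determined by $(s,x)$, and hence so is the law of $\omega$ under $\widehat\PP^{s,x}$.

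The main technical obstacle is the passage from $\RR^d$ to a closed subset of $\RR^{d+1}$ in Proposition~\ref{prop:KSLawUniqueness}. However, the argument of \cite[Proposition~5.4.27]{KaratzasShreve1991} (equivalently, \cite[Theorem~4.4.2]{Ethier_Kurtz}) is built entirely from regular conditional probability distributions on the canonical path space and the observation that, almost surely, these conditional distributions are themselves solutions of the time-homogeneous martingale problem starting from the conditioning time. Since $[0,\infty)\times\overline\HH$ is Polish and the augmented right-continuous filtration $\sG_{t+}$ satisfies the usual conditions (as noted after Definition~\ref{defn:Martingale_Problem}), that argument transfers without essential modification. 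The only apparent subtlety, the unboundedness of $(a,b)$, is harmless because the test functions in Definition~\ref{defn:Martingale_Problem} are taken from $C^2_0(\overline\HH)$, so all integrals in the defining martingale identities remain well-defined irrespective of the growth of the coefficients.
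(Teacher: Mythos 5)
Your overall strategy is the same as the paper's: append time to the state to obtain a time-homogeneous martingale problem on (a subset of) $\RR^{d+1}$, extend the coefficients off the half-space, and invoke Proposition \ref{prop:KSLawUniqueness}. The reduction itself, and the observation that the marginal hypothesis transfers because the time coordinate is deterministic, are correct.

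There is, however, a genuine gap at the one step the paper treats with care. Proposition \ref{prop:KSLawUniqueness} requires its hypothesis --- equality of one-dimensional marginals for any two solutions --- to hold for \emph{every} starting point $y\in\RR^{d+1}$, not only for $y\in[0,\infty)\times\overline\HH$; this is not a formality, since the regular-conditional-probability argument underlying \cite[Proposition 5.4.27]{KaratzasShreve1991} conditions on the path and must handle solutions started from arbitrary points of the extended state space. You only verify the hypothesis for starting points in $[0,\infty)\times\overline\HH$ (via Proposition \ref{prop:LawSupport}, which says nothing about solutions started outside the half-space). Worse, your prescribed extension --- $\tilde\sigma=0$ and $\tilde b_d\geq 0$ on $\{x_d<0\}$ --- does not force marginal uniqueness there: with vanishing diffusion the dynamics reduce to the ODE $\dot Y=\tilde b(Y)$, and a merely continuous (or measurable) nonzero drift admits non-unique solutions, so two solutions started at a point with $y_d<0$ or $y_0<0$ need not have matching marginals. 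The paper closes this by extending \emph{all} coefficients by zero outside $[0,\infty)\times\overline\HH$, so that for such starting points the unique solution is the constant path (its Case 1), and then separately showing (Case 2) that solutions started with $y_0\geq 0$, $y_d\geq 0$ stay in the half-space and induce solutions of the original problem, to which the hypothesis applies. Your fallback --- asserting that the proof of \cite[Proposition 5.4.27]{KaratzasShreve1991} ``transfers without essential modification'' to the Polish state space $[0,\infty)\times\overline\HH$ --- would also close the gap, but it is precisely the nontrivial step and cannot simply be asserted; either carry out that transfer or verify the hypothesis of Proposition \ref{prop:KSLawUniqueness} on all of $\RR^{d+1}$ as the paper does.
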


\begin{proof}
Just as in the proof of Theorem \ref{thm:WeakExistence_martingale}, we can find a matrix-valued function, $\sigma\in C_{\loc}([0,\infty)\times\overline\HH;\RR^{d\times d})$, obeying \eqref{eq:sigma_sigma*_equalto_xda}. As \eqref{eq:MimickingSDE} is time-inhomogeneous with initial condition $(s,x)\in[0,\infty)\times\overline{\HH}$, rather than time-homogeneous with initial condition $x\in\RR^d$, as assumed by Proposition \ref{prop:KSLawUniqueness}, we first extend the coefficients, $\sigma(t,x)$ and $b(t,x)$ as functions of $(t,x) \in [0,\infty)\times\overline\HH$, to coefficients $\tilde\sigma(t,x)$ and $\tilde b(t,x)$ as functions of $(t,x) \in \RR^{d+1}$ by setting, for $1\leq i,j \leq d$,
\begin{equation}
\label{eq:ExtentionMimickingCoeff1}
\begin{aligned}
\tilde\sigma_{ij}(t,x)
&:=
\begin{cases}
\sigma_{ij}(t,x) &\hbox{for } (t,x) \in [0,\infty)\times \overline\HH,
\\
0 &\hbox{otherwise, }
\end{cases}
\\
\tilde b_i(t,x)
&:=
\begin{cases}
b_i(t,x) &\hbox{for } (t,x) \in [0,\infty)\times \overline\HH,
\\
0 &\hbox{otherwise, }
\end{cases}
\end{aligned}
\end{equation}
The coefficients $\tilde\sigma_{ij}$ and $\tilde b_i$ defined in \eqref{eq:ExtentionMimickingCoeff1} are Borel measurable functions and need not be continuous. To obtain a time-homogeneous differential operator, as in Proposition \ref{prop:KSLawUniqueness}, we regard time, $t$, as an additional space coordinate and consider 
the following stochastic differential equation,
\begin{equation}
\begin{aligned}
\label{eq:AddTimeDimension}
dY_0(r)  &= dr,\quad \forall\, r \geq 0,
\\
d Y_i(r) &= \tilde b_i(Y(r)) dr + \sum_{j=1}^d \tilde \sigma_{ij}(Y(r)) dW_j(r), \quad i=1,\ldots,d,\quad \forall\, r \geq 0.
\end{aligned}
\end{equation}
Now, let $\sG$ denote the time-homogeneous differential operator,
\begin{align*}
\sG u(y) &:=  u_{y_0}+ \sum_{i=1}^d \tilde b_i(y) u_{y_i} + \sum_{i,j=1}^d \frac{1}{2} \tilde a_{ij}(y) u_{y_iy_j}, \quad \forall\, y \in \RR^{d+1},
\end{align*}
where $u \in C^2(\RR^{d+1})$.

For $x \in \overline\HH$ and $s \geq 0$, let $\PP^{s,x}$ and $\QQ^{s,x}$ be two solutions to the martingale problem associated to $\sA_t$ with initial condition $(s,x)$. We now extend the probability measures, $\PP^{s,x}$ and $\QQ^{s,x}$, from the measurable space $C_{\loc}([0,\infty);\overline\HH)$ to measures $\widetilde \PP^{s,x}$ and $\widetilde \QQ^{s,x}$
the canonical space $C_{\loc}([0,\infty);\RR^{d+1})$. We write functions $\omega$ in $C_{\loc}([0,\infty);\overline\HH)$ and $\tilde\omega$ in $C_{\loc}([0,\infty);\RR^{d+1})$, respectively, as
$$
\omega = \left(\omega_1, \ldots, \omega_d\right)
\quad\hbox{and}\quad
\tilde\omega = \left(\tilde\omega_0, \tilde\omega_1, \ldots, \tilde\omega_d\right).
$$
We define the probability measure $\widetilde \PP^{s,x}$ so that the distribution of the components $(\tilde\omega_1,\ldots, \tilde\omega_d)$ under the measure $\widetilde \PP^{s,x}$ is equal to the distribution of the components $(\omega_1,\ldots,\omega_d)$ under the measure $\PP^{s,x}$, that is
\begin{equation}
\begin{aligned}
\label{eq:Measures_agree}
&\widetilde \PP^{s,x} \left(\tilde\omega \in C_{\loc}([0,\infty);\RR^{d+1}): (\tilde\omega_1(r_i), \ldots, \tilde\omega_d(r_i)) \in B_i, i=1,\ldots,m\right) \\
&\quad
:=  \PP^{s,x} \left(\omega \in C_{\loc}([0,\infty);\overline\HH): (\omega_1(r_i), \ldots, \omega_d(r_i)) \in B_i, i=1,\ldots,m\right),
\end{aligned}
\end{equation}
for all $m \geq 1$, and $0\leq r_1 \leq r_2\leq\cdots\leq r_m$, and $B_i \in \sB(\RR^d)$, where $i=1,\ldots,m$. Let $\zeta_s(r) := r+s$, for all $r\geq 0$, be the shifted coordinate function. Keeping in mind \eqref{eq:AddTimeDimension}, we set
\begin{align}
\label{eq:Extension_measure}
\widetilde \PP^{s,x} \left\{\tilde\omega \in C_{\loc}([0,\infty);\RR^{d+1}): \tilde\omega_0 = \zeta_s\right\} :=1,
\end{align}
so that
\begin{align*}
\widetilde \PP^{s,x}\left\{\tilde\omega \in C_{\loc}([0,\infty);\RR^{d+1}):\tilde\omega_0\neq \zeta_s\right\} &= 1 - \widetilde \PP^{s,x}\left\{\tilde\omega \in C_{\loc}([0,\infty);\RR^{d+1}):\tilde\omega_0=\zeta_s\right\}
\\
&=0 \quad\hbox{(by \eqref{eq:Extension_measure})}.
\end{align*}
In the same way, we define an extension, $\widetilde\QQ^{s,x}$, of $\QQ^{s,x}$ from $C_{\loc}([0,\infty);\overline\HH)$ to $C_{\loc}([0,\infty);\RR^{d+1})$.

Notice that $\widetilde \PP^{s,x}$ and $\widetilde \QQ^{s,x}$ are both solutions to the classical time-homogeneous martingale problem (in the sense of \cite[Definition 5.4.15]{KaratzasShreve1991}) associated to $\sG$, with initial condition $(s,x)\in [0,\infty)\times\overline\HH \subset \RR^{d+1}$. Therefore, the probability measures $\PP^{s,x}$ and $\QQ^{s,x}$ will coincide if their extensions $\widetilde\PP^{s,x}$ and $\widetilde\QQ^{s,x}$ coincide. But, by Proposition \ref{prop:KSLawUniqueness}, the probability measures $\widetilde\PP^{s,x}$ and $\widetilde\QQ^{s,x}$ will coincide
if, for any $y=(y_0,\ldots,y_d)\in\RR^{d+1}$ and any two solutions $\widetilde\PP_i^{y}$, $i=1,2$, to the classical time-homogeneous martingale problem associated to $\sG$ with initial condition $y$, the one-dimensional marginal distributions of $\widetilde\PP_1^{y}$ and $\widetilde\PP_2^{y}$ coincide. We now appeal to \cite[Proposition 5.4.6 \& Corollary 5.4.8]{KaratzasShreve1991} to obtain a weak solution, $(Y^i,W^i)$, $(\Omega,\sF^i,\widetilde\PP^y_i)$, $\{\sF^i_r\}_{r\geq 0}$ for $i=1,2$, to \eqref{eq:AddTimeDimension} with initial condition $Y^i(0)=y$ such that the law of $Y^i$ is given by $\widetilde\PP^y_i$. Since the one-dimensional marginal distributions of the probability measures $\widetilde\PP_1^{y}$ and $\widetilde\PP_2^{y}$ agree if and only if the one-dimensional marginal distributions of the processes $Y^1$ and $Y^2$ agree,
it is enough to show that the latter holds. For this purpose, we consider two cases.

\setcounter{case}{0}
\begin{case}[$y_d < 0$ or $y_0<0$]
\label{case:y_d_or_y_0_negative}
In this case, the coefficients $\tilde b$ and $\tilde \sigma$ defined in \eqref{eq:ExtentionMimickingCoeff1} are identically zero on a neighborhood of $y$ in $\RR^{d+1}$, and so the unique solution, $Y$, to \eqref{eq:AddTimeDimension} with initial condition $Y(0)=y$
is given by
\begin{align}
\label{eq:y_d_or_y_0_negative}
Y(r)=y,\quad \forall\, r \geq 0.
\end{align}
Thus, $Y^1 = Y = Y^2$ and so the one-dimensional marginal distributions of the processes $Y^1$ and $Y^2$ trivially agree.
\end{case}

\begin{case}[$y_d \geq 0$ and $y_0 \geq 0$]
\label{case:y_d_or_y_0_nonnegative}
We claim that any weak solution, $(Y(r))_{r \geq 0}$, to \eqref{eq:AddTimeDimension} with initial condition $Y(0)=y$, has the property,
\begin{equation}
\label{eq:Y_dPositive}
Y_d(r) \geq 0 \quad \hbox{(almost surely)}, \quad\forall\, r \geq 0.
\end{equation}
Indeed, if this were not so, then there would be a constant $\eps>0$ such that $Y_d$ reached the level $-\eps$ with non-zero probability at some stopping time $\tau_{\eps}$. Because $Y_d$ has continuous paths, the process $Y_d$ would have hit $-\eps/2$ at a stopping time $\tau_{\eps/2} < \tau_{\eps}$, and from \eqref{eq:y_d_or_y_0_negative} in Case \ref{case:y_d_or_y_0_negative}, we would have
$$
Y_d(r) = -\eps/2, \quad\forall\, r \geq \tau_{\eps/2}.
$$
But this contradicts our assumption that $Y_d$ hits $-\eps$ at some time $\tau_{\eps} > \tau_{\eps/2}$, and therefore \eqref{eq:Y_dPositive} holds, as claimed.

Any weak solution, $(Y(r))_{r \geq 0}$, to \eqref{eq:AddTimeDimension} with initial condition $Y(0)=y$ induces a weak solution,
$(\widehat X(t))_{t \geq y_0}$,
\begin{equation}
\label{eq:FromYToX}
\widehat X(t) := (Y_1(t-y_0), Y_2(t-y_0), \ldots, Y_d(t-y_0)) \quad \forall\, t \geq y_0,
\end{equation}
to the stochastic differential equation
\begin{equation*}
\begin{aligned}
d \widehat X_i(t) &= \tilde b_i(t, \widehat X(t)) dt + \sum_{j=1}^d \tilde\sigma_{ij}(t, \widehat X(t)) dW_j(t), \quad i=1,\ldots,d,\quad \forall\, t \geq y_0,
\end{aligned}
\end{equation*}
with initial condition
\[
\widehat X(y_0)=(Y_1(0), \ldots, Y_d(0))=(y_1,\ldots,y_d)\in\overline\HH.
\]
Moreover, $\widehat X(t)$ remains in $\overline\HH$, for all $t \geq y_0$, by \eqref{eq:Y_dPositive}. The probability law of the process $(\widehat X(t))_{t \geq 0}$ coincides with the probability law of the process $(Y_1(r),\ldots, Y_d(r))_{r \geq 0}$, by \eqref{eq:Measures_agree} (with $y=(s,x)$ and $s=y_0$) since, by definition, $(Y(r))_{r \geq 0}$ is an extension of $(\widehat X(t))_{t \geq 0}$. By the hypothesis of Corollary \ref{cor:UniqueSolutionMartingaleProblem}, for every $x \in \overline\HH$ and $s \geq 0$, any two solutions $\PP^{s,x}$ and $\QQ^{s,x}$ to the martingale problem in Definition \ref{defn:Martingale_Problem} associated to $\sA_t$ in \eqref{eq:MartingaleGenerator} with initial condition $(s,x)$ have the same one-dimensional marginal distributions. Therefore, the one-dimensional marginal distributions of the process $(\widehat X(t))_{t \geq 0}$ are uniquely determined, which implies that the one-dimensional marginal distributions of the process $(Y_1(r),\ldots, Y_d(r))_{r \geq 0}$ are uniquely determined. Hence, the one-dimensional marginal distributions of the probability measures $\widetilde \PP^y_1$ and $\widetilde \PP^y_2$ agree for all $y\in\RR^{d+1}$ such that $y_0\geq 0$ and $y_d \geq 0$.
\end{case}

By combining the conclusions of Cases \ref{case:y_d_or_y_0_negative} and \ref{case:y_d_or_y_0_nonnegative}, we see that the one-dimensional marginal distributions of the probability measures $\widetilde \PP^y_1$ and $\widetilde \PP^y_2$ agree for all $y\in\RR^d$. Proposition \ref{prop:KSLawUniqueness} implies that $\widetilde \PP^y_1= \widetilde \PP^y_2$, for all $y\in \RR^{d+1}$. From \eqref{eq:Measures_agree}, we have $\PP^{s,x}=\QQ^{s,x}$, for all $(s,x)\in[0,\infty)\times\overline\HH$, which yields the desired uniqueness, and so the martingale problem associated to $\sA_t$ is well-posed.
\end{proof}

Finally, we have

\begin{proof}[Proof of Theorem \ref{thm:MainExistenceUniquenessMartProb}]
Theorem \ref{thm:WeakExistence_martingale} asserts the existence of solutions to the martingale problem associated to $\sA_t$.
We now describe how uniqueness of solutions to the martingale problem associated with $\sA_t$ follows from Proposition \ref{prop:MarginalsUniqueness} and Corollary \ref{cor:UniqueSolutionMartingaleProblem}. Let $\widehat \PP^{s,x}_i$, $i=1,2$, be two solutions to the martingale problem associated with $\sA_t$ with initial condition $(s,x)\in[0,\infty)\times\overline\HH$. Let $\sigma$ be defined as in \eqref{eq:Definition_sigma} in the proof of Theorem \ref{thm:WeakExistence_martingale}, where we showed that $\sigma$ satisfies \eqref{eq:FormOfDiffusionMatrix}, and let $b$ be as in \eqref{eq:MartingaleGenerator}. From Assumption \ref{assump:Coeff} (a hypothesis of Theorem \ref{thm:MainExistenceUniquenessMartProb})
we see that the coefficient functions $(\sigma, b)$ satisfy the conditions in Assumption \ref{assump:MimickingCoeffSpecialForm}, and so we may apply Proposition \ref{prop:MarginalsUniqueness} to conclude that any two weak solutions to \eqref{eq:MimickingSDE} with initial condition $(s,x)$  have the same one-dimensional marginal distributions for $t\geq s$. Let $\widehat X_i$, $i=1,2$, be the weak solutions to \eqref{eq:MimickingSDE} with initial condition $\widehat X_i(s)=x$ such that the law of $\widehat X_i$ is given by $\widehat\PP^{s,x}_i$ (see \cite[Proposition 5.4.6 \& Corollary 5.4.8]{KaratzasShreve1991}). Consequently, the one-dimensional marginal distributions agree for the probability measures, $\widehat\PP^{s,x}_i$, $i=1,2$, since they agree for the stochastic processes, $\widehat X_i$, $i=1,2$. We now apply Corollary \ref{cor:UniqueSolutionMartingaleProblem} to conclude that the probability measures, $\widehat\PP^{s,x}_i$, $i=1,2$, coincide and so we obtain the desired uniqueness of solutions to the martingale problem associated with $\sA_t$. Therefore, the martingale problem associated to $\sA_t$ is well-posed, for any initial condition $(s,x)\in[0,\infty)\times\overline\HH$.
\end{proof}

\begin{proof}[Proof of Theorem \ref{thm:MainWeakExistenceUniquenessSDE}]
By Theorem \ref{thm:WeakExistence_SDE}, we obtain existence of weak solutions to \eqref{eq:MimickingSDE}. Since each weak solution induces a probability measure on $C_{\loc}([0,\infty);\overline\HH)$ which solves the martingale problem associated to $\sA_t$, Theorem \ref{thm:MainExistenceUniquenessMartProb} implies that the probability law of the weak solutions to \eqref{eq:MimickingSDE} is uniquely determined.

The fact that the weak solutions to \eqref{eq:MimickingSDE} satisfy the strong Markov property can be shown to follow by the same argument applied in the time-homogeneous case in \cite[Theorem 4.4.2 (b) \& (c)]{Ethier_Kurtz}; an alternative argument is provided below.

To prove the strong Markov property of weak solutions to \eqref{eq:MimickingSDE}, we consider again the time-homogeneous stochastic differential equation \eqref{eq:AddTimeDimension} arising in the proof of Corollary \ref{cor:UniqueSolutionMartingaleProblem}. The same argument as the one used in the proof of Corollary \ref{cor:UniqueSolutionMartingaleProblem} to conclude that the martingale problem associated to $\sA_t$ is well-posed can be used to conclude that the classical martingale problem associated to the stochastic differential equation \eqref{eq:AddTimeDimension} is well-posed. Therefore, by \cite[Theorem 5.4.20]{KaratzasShreve1991}, we see that for any $y \in \RR^{d+1}$, the (necessarily unique) weak solution $Y=Y^y$ to \eqref{eq:AddTimeDimension} started at $y$ possesses the strong Markov property, that is, for any stopping time $\tau$ of $\{\sB_t(C_{\loc}([0,\infty);\RR^{d+1}))\}_{t \geq 0}$, Borel measurable set $B \in \sB(\RR^{d+1})$, and $u \geq 0$, we have
\begin{equation}
\label{eq:StrongMarkovY}
\widetilde \PP^y(Y(\tau+u) \in B | \sB_{\tau}(C_{\loc}([0,\infty);\RR^{d+1})) = \widetilde \PP^y(Y(\tau+u) \in B | Y(\tau)),
\end{equation}
where $\widetilde \PP^{y}$ denotes the probability law of the process $Y$ started at $y$. Let $(s,x) \in [0,\infty)\times\overline\HH$ and let $\widehat X=\widehat X^{s,x}$ be the unique weak solution to \eqref{eq:MimickingSDE} with initial condition $\widehat X^{s,x}(s)=x$. Observe that
\[
Y^{s,x} (r) = \left(r+s, \widehat X_1(r+s), \ldots, \widehat X_d(r+s)\right)\quad r \geq 0,
\]
is the (unique) solution to \eqref{eq:AddTimeDimension} with initial condition $Y^{s,x}(0)=(s,x)$. Therefore, \eqref{eq:StrongMarkovY} can be rewritten in terms of the probability law $\PP^{s,x}$ of $\widehat X^{s,x}$,
\begin{equation}
\label{eq:StrongMarkovX}
 \PP^{s,x}(\widehat X(\tau+u) \in B | \sB_\tau(C_{\loc}([0,\infty);\overline\HH)) =  \PP^{s,x}(\widehat X(\tau+u) \in B | X(\tau)),
\end{equation}
for any stopping time $\tau$ of $\{\sB_t(C_{\loc}([0,\infty);\overline\HH))\}_{t \geq 0}$, any Borel measurable set $B \in \sB(\overline\HH)$ and $u \geq s$. Thus, $\widehat X^{s,x}$ satisfies the strong Markov property.
\end{proof}

\subsection{Matching one-dimensional marginal probability distributions}
\label{subsec:MatchingOneDimMarginals}
We can now complete the proof of Theorem \ref{thm:MainMarginalsMatching}.

\begin{proof}[Proof of Theorem \ref{thm:MainMarginalsMatching}]
The existence of coefficient functions, $(\sigma,b)$, satisfying Assumption \ref{assump:MimickingCoeffSpecialForm} follows by the same argument employed at the beginning of the proof of Corollary \ref{cor:UniqueSolutionMartingaleProblem}.

Let $\widehat X$ be the unique weak solution to the mimicking stochastic differential equation \eqref{eq:MimickingSDE} with coefficients $(\sigma, b)$ and initial condition $\widehat X(0)= X(0)=x_0$, given by Theorem \ref{thm:MainWeakExistenceUniquenessSDE}. As in the proof of Proposition \ref{prop:MarginalsUniqueness}, using assumption \eqref{eq:SupportItoProcess}, we need to show that for any deterministic time $T\geq 0$ and function $g\in C^{\infty}_0(\overline{\HH})$, we have
\begin{equation}
\label{eq:MatchingMarginals1}
\EE\left[g(\widehat X(T))\right]  = \EE\left[g(X(T))\right].
\end{equation}
Notice that it is enough to consider functions, $g$, with support in $\overline\HH$ because both the It\^o process, $X$, and the solution to \eqref{eq:MimickingSDE}, $\widehat X$, are supported in $\overline\HH$ by \eqref{eq:SupportItoProcess} and Proposition \ref{prop:LawSupport}, respectively.

Let $v \in \sC^{2+\alpha}(\overline{\HH}_T)$ be defined as the unique solution to the terminal value problem \eqref{eq:MarginalUniqueness3}.
Then,  \eqref{eq:MatchingMarginals3} gives
\begin{equation}
\label{eq:MatchingMarginals4}
\EE\left[g(\widehat X(T))\right] = v(0,x_0).
\end{equation}
We wish to prove that \eqref{eq:MatchingMarginals4} holds with $X(T)$ in place of $\widehat X(T)$. We proceed as in the proof of Proposition \ref{prop:MarginalsUniqueness}. We apply the standard It\^o's formula to $v(t, X^{\eps}(t))$, where $X^{\eps}$ is defined in \eqref{eq:Shifted_process} with the role of $\widehat X$ now replaced by $X$. We cannot apply the version of It\^o's formula proved in Proposition \ref{prop:ItoLemma} to the process $X$ because it is an It\^o process and it does not necessarily have the special structure of $\widehat X$ (see Assumption \ref{assump:MimickingCoeffSpecialForm})
whose coefficients satisfy the hypotheses of Proposition \ref{prop:ItoLemma}, a fact which is directly used in the proof of Proposition \ref{prop:ItoLemma}. We obtain
\begin{align*}
dv(t, X^{\eps}(t))
&= \left(v_t(t, X^{\eps}(t)) + \sum_{i=1}^d \beta_i(t) v_{x_i}(t, X^{\eps}(t)) + \sum_{i,j=1}^d \frac{1}{2} (\xi\xi^*)_{ij}(t)v_{x_ix_j}(t, X^{\eps}(t)) \right) dt
\\
&\qquad + \sum_{i,j=1}^d \xi_{ij}(t) v_{x_i}(t, X^{\eps}(t)) d W_j(t).
\end{align*}
The $dW_j(t)$-terms in the preceding identity are square-integrable, continuous martingales, because
\[
[0,T] \ni t  \mapsto v_{x_i}(t, X^{\eps}(t))
\]
are bounded processes since $v_{x_i} \in C([0,T]\times\overline\HH)$, and $\xi(t)$ is an adapted process which is square-integrable by \eqref{eq:IntegrabilityCondition}. Therefore,
\begin{align*}
\EE\left[v(T, X^{\eps}(T))\right]
&=
v(0,x^{\eps}_0) +  \EE\left[\int_0^T \left(v_t(t,X^{\eps}(t)) + \sum_{i=1}^d \beta_i(t) v_{x_i}(t, X^{\eps}(t)) \right.\right.
\\
&\qquad + \left.\left. \frac{1}{2}\sum_{i,j=1}^d (\xi\xi^*)_{ij}(t)v_{x_ix_j}(t,X^{\eps}(t)) \right) \,dt\right].
\end{align*}
Using conditional expectations, we may rewrite the preceding identity as
\begin{align*}
\EE\left[v(T, X^{\eps}(T))\right]
&=
v(0,x^{\eps}_0) + \int_0^T\EE\left[\EE\left[\left(v_t(t,X^{\eps}(t)) + \sum_{i=1}^d\beta_i(t) v_{x_i}(t, X^{\eps}(t)) \right.\right.\right.
\\
&\qquad
+ \left.\left.\left.\left.\frac{1}{2}\sum_{i,j=1}^d (\xi\xi^*)_{ij}(t)v_{x_ix_j}(t,X^{\eps}(t))\right)
\right| X^{\eps}(t)\right]\right] \,dt
\\
&=
v(0,x^{\eps}_0) + \int_0^T\EE\left[\right.v_t(t,X^{\eps}(t)) + \sum_{i=1}^d\EE\left[\beta_i(t)\left.\right| X^{\eps}(t)\right] v_{x_i}(t, X^{\eps}(t))
\\
&\qquad
+\frac{1}{2}\sum_{i,j=1}^d \EE\left[(\xi\xi^*)_{ij}(t)\left.\right| X^{\eps}(t)\right] v_{x_ix_j}(t,X^{\eps}(t))\left.\right] \,dt,
\end{align*}
and thus, using \eqref{eq:DefinitionMimickingCoeff_b}, \eqref{eq:DefinitionMimickingCoeff_a} and \eqref{eq:MartingaleGenerator}, we see that
\begin{align}
\label{eq:Equality_v_eps}
\EE\left[v(T, X^{\eps}(T))\right] = v(0,x^{\eps}_0) + \EE\left[\int_0^T \left(v_t(t, X^{\eps}(t))+\sA_t v(t, X^{\eps}(t)) \right)\,dt\right].
\end{align}
But $v_t(t,x)+\sA_t v(t,x)=0$, for all $(t,x)\in \HH_T$, by \eqref{eq:MarginalUniqueness3}. Therefore, by letting $\eps \downarrow 0$ in the identity \eqref{eq:Equality_v_eps}, we obtain
\[
\EE\left[g( X(T))\right] = v(0,x_0),
\]
and by \eqref{eq:MatchingMarginals4} this concludes the proof.
\end{proof}

%
%

\bibliography{mfpde}

\def\cprime{$'$} \def\polhk#1{\setbox0=\hbox{#1}{\ooalign{\hidewidth
  \lower1.5ex\hbox{`}\hidewidth\crcr\unhbox0}}} \def\cprime{$'$}
  \def\cprime{$'$} \def\cprime{$'$}
  \def\lfhook#1{\setbox0=\hbox{#1}{\ooalign{\hidewidth
  \lower1.5ex\hbox{'}\hidewidth\crcr\unhbox0}}} \def\cprime{$'$}
  \def\cprime{$'$} \def\cprime{$'$} \def\cprime{$'$} \def\cprime{$'$}
\providecommand{\bysame}{\leavevmode\hbox to3em{\hrulefill}\thinspace}
\providecommand{\MR}{\relax\ifhmode\unskip\space\fi MR }
\providecommand{\MRhref}[2]{%
  \href{http://www.ams.org/mathscinet-getitem?mr=#1}{#2}
}
\providecommand{\href}[2]{#2}
\begin{thebibliography}{10}

\bibitem{Altay_Schmock_2012}
S.~Altay and U.~Schmock, \emph{Lecture notes on the {Y}amada-{W}atanabe
  condition for the pathwise uniqueness of solutions of certain stochastic
  differential equations}, available at
  \url{fam.tuwien.ac.at/~schmock/notes/Yamada-Watanabe.pdf}.

\bibitem{Antonov_Misirpashaev_Piterbarg_2009}
A.~Antonov, T.~Misirpashaev, and V.~Piterbarg, \emph{Markovian projection on a
  {H}eston model}, J. Comput. Finance \textbf{13} (2009), 23--47,
  ssrn.com/abstract=997001.

\bibitem{Athreya_Barlow_Bass_Perkins_2002}
S.~R. Athreya, M.~T. Barlow, R.~F. Bass, and E.~A. Perkins, \emph{Degenerate
  stochastic differential equations and super-{M}arkov chains}, Probab. Theory
  Related Fields \textbf{123} (2002), 484--520.

\bibitem{Atlan_2006}
M.~Atlan, \emph{Localizing volatilities},  (2006), arXiv:math/0604316.

\bibitem{Bass_Burdzy_Chen_2007}
R.~F. Bass, K.~Burdzy, and Z.~Q. Chen, \emph{Pathwise uniqueness for a
  degenerate stochastic differential equation}, The Annals of Probability
  (2007), 2385--2418.

\bibitem{Bass_Lavrentiev_2007}
R.~F. Bass and A.~Lavrentiev, \emph{The submartingale problem for a class of
  degenerate elliptic operators}, Probab. Theory Related Fields \textbf{139}
  (2007), 415--449.

\bibitem{Bass_Perkins_2003}
R.~F. Bass and E.~A. Perkins, \emph{Degenerate stochastic differential
  equations with {H}\"older continuous coefficients and super-{M}arkov chains},
  Trans. Amer. Math. Soc. \textbf{355} (2003), 373--405.

\bibitem{Bass_Perkins_2004}
\bysame, \emph{Countable systems of degenerate stochastic differential
  equations with applications to super-{M}arkov chains}, Electron. J. Probab
  \textbf{9} (2004), 634--673.

\bibitem{Bayraktar_Kardaras_Xing_2012}
E.~Bayraktar, C.~Kardaras, and H.~Xing, \emph{Valuation equations for
  stochastic volatility models}, SIAM J. Financial Math. \textbf{3} (2012),
  351--373, arXiv:1004.3299. \MR{2968038}

\bibitem{Bentata_Cont_mimicking}
A.~Bentata and R.~Cont, \emph{Mimicking the marginal distributions of a
  semimartingale}, arXiv:0910.3992.

\bibitem{Bogachev_Krylov_Rockner_2009}
V.~I. Bogachev, N.~V. Krylov, and M.~Rockner, \emph{Elliptic and parabolic
  equations for measures}, Russian Math. Surveys \textbf{64} (2009), 973--1078.

\bibitem{BrunickThesis}
G.~Brunick, \emph{A weak existence result with application to the financial
  engineer's calibration problem}, {Ph.D}. thesis, Carnegie Mellon University,
  Pittsburgh, PA, May 2008.

\bibitem{Brunick_2013}
G.~Brunick, \emph{Uniqueness in law for a class of degenerate diffusions with
  continuous covariance}, Probab. Theory Related Fields \textbf{155} (2013),
  265--302, arXiv:1105.1821. \MR{3010399}

\bibitem{Brunick_Shreve_2010}
G.~Brunick and S.~E. Shreve, \emph{Matching statistics of an {I}t{\^o} process
  by a process of diffusion type}, Annals of Applied Probability, to appear,
  arXiv:1011.0111.

\bibitem{Cherny_Engelbert_2005}
A.~S. Cherny and H-J. Engelbert, \emph{Singular stochastic differential
  equations}, Lecture Notes in Mathematics, vol. 1858, Springer, Berlin, 2005.

\bibitem{DaskalHamilton1998}
P.~Daskalopoulos and R.~Hamilton, \emph{{$C^\infty$}-regularity of the free
  boundary for the porous medium equation}, J. Amer. Math. Soc. \textbf{11}
  (1998), 899--965.

\bibitem{Daskalopoulos_Rhee_2003}
P.~Daskalopoulos and E.~Rhee, \emph{Free-boundary regularity for generalized
  porous medium equations}, Commun. Pure Appl. Anal. \textbf{2} (2003),
  481--494.

\bibitem{Dupire1994}
B.~Dupire, \emph{Pricing with a smile}, Risk Magazine \textbf{7} (1994),
  18--20.

\bibitem{Engelbert_Schmidt_1984}
H.~J. Engelbert and W.~Schmidt, \emph{On one-dimensional stochastic
  differential equations with generalized drift}, Stochastic differential
  systems ({M}arseille-{L}uminy, 1984), Lecture Notes in Control and Inform.
  Sci., vol.~69, Springer, Berlin, 1985, pp.~143--155.

\bibitem{Ethier_Kurtz}
S.~N. Ethier and T.~G. Kurtz, \emph{Markov processes: Characterization and
  convergence}, Wiley, 1985.

\bibitem{Feehan_parabolicmaximumprinciple}
P.~M.~N. Feehan, \emph{Maximum principles for boundary-degenerate linear
  parabolic differential operators}, arXiv:1306.5197.

\bibitem{Feehan_maximumprinciple}
\bysame, \emph{Partial differential operators with non-negative characteristic
  form, maximum principles, and uniqueness for boundary value and obstacle
  problems}, Communications in Partial Differential Equations, to appear,
  arXiv:1204.6613.

\bibitem{Feehan_perturbationlocalmaxima}
\bysame, \emph{Perturbations of local maxima and comparison principles for
  boundary-degenerate linear differential equations}, arXiv:1305.5098.

\bibitem{Feehan_Pop_mimickingdegen}
P.~M.~N. Feehan and C.~A. Pop, \emph{Degenerate-parabolic partial differential
  equations with unbounded coefficients, martingale problems, and a mimicking
  theorem for {I}t\^o processes}, arXiv:1112.4824v1.

\bibitem{Feehan_Pop_mimickingdegen_probability_v1}
\bysame, \emph{On the martingale problem for degenerate-parabolic partial
  differential operators with unbounded coefficients and a mimicking theorem
  for {I}t\^o processes}, arXiv:1211.4636v1.

\bibitem{Feehan_Pop_mimickingdegen_pde}
\bysame, \emph{A {S}chauder approach to degenerate-parabolic partial
  differential equations with unbounded coefficients}, Journal of Differential
  Equations \textbf{254} (2013), 4401--4445, arXiv:1112.4824.

\bibitem{FriedmanSDE}
A.~Friedman, \emph{Stochastic differential equations and applications}, vol. I,
  II, Academic, New York, 1975 and 1976.

\bibitem{Gyongy}
I.~Gy{\"o}ngy, \emph{Mimicking the one-dimensional marginal distributions of
  processes having an {I}t{\^o} differential}, Probability Theory and Related
  Fields \textbf{71} (1986), 501--516.

\bibitem{Heston1993}
S.~Heston, \emph{A closed-form solution for options with stochastic volatility
  with applications to bond and currency options}, Review of Financial Studies
  \textbf{6} (1993), 327--343.

\bibitem{Ikeda_Watanabe}
N.~Ikeda and S.~Watanabe, \emph{Stochastic differential equations and diffusion
  processes}, North-Holland, Amsterdam, 1981.

\bibitem{KaratzasShreve1991}
I.~Karatzas and S.~E. Shreve, \emph{Brownian motion and stochastic calculus},
  second ed., Springer, New York, 1991.

\bibitem{Koch}
H.~Koch, \emph{Non-{E}uclidean singular integrals and the porous medium
  equation}, Habilitation Thesis, University of Heidelberg, 1999,
  \url{www.mathematik.uni-dortmund.de/lsi/koch/publications.html}.

\bibitem{Krylov_ControlledDiffProc}
N.~V. Krylov, \emph{Controlled diffusion processes}, Applications of
  Mathematics, vol.~14, Springer, New York, 1980.

\bibitem{Krylov_LecturesHolder}
\bysame, \emph{Lectures on elliptic and parabolic equations in {H}\"older
  spaces}, American Mathematical Society, Providence, RI, 1996.

\bibitem{Krylov_Priola_2010}
N.~V. Krylov and E.~Priola, \emph{Elliptic and parabolic second-order {PDE}s
  with growing coefficients}, Comm. Partial Differential Equations \textbf{35}
  (2010), 1--22.

\bibitem{Luo_2011}
D.~Luo, \emph{Pathwise uniqueness of multi-dimensional stochastic differential
  equations with {H}\"older diffusion coefficients}, Front. Math. China
  \textbf{6} (2011), 129--136.

\bibitem{Nadirashvili_1997}
N.~Nadirashvili, \emph{Nonuniqueness in the martingale problem and the
  {D}irichlet problem for uniformly elliptic operators}, Ann. Scuola Norm. Sup.
  Pisa Cl. Sci. (4) \textbf{24} (1997), 537--549.

\bibitem{Oksendal_2003}
B.~{\O}ksendal, \emph{Stochastic differential equations}, sixth ed., Springer,
  Berlin, 2003.

\bibitem{Piterbarg_markovprojection}
V.~Piterbarg, \emph{Markovian projection method for volatility calibration},
  Risk Magazine (April 2007), 84--89, ssrn.com/abstract=906473.

\bibitem{Protter}
P.~E. Protter, \emph{Stochastic integration and differential equations}, second
  ed., Springer, Berlin, 2005. \MR{2273672 (2008e:60001)}

\bibitem{Rockner_Zhang_2010}
M.~R{\"o}ckner and X.~Zhang, \emph{Weak uniqueness of {F}okker-{P}lanck
  equations with degenerate and bounded coefficients}, C. R. Math. Acad. Sci.
  Paris \textbf{348} (2010), 435--438.

\bibitem{Rutkowski_1990}
M.~Rutkowski, \emph{On solutions of stochastic differential equations with
  drift}, Probability theory and related fields \textbf{85} (1990), 387--402.

\bibitem{ShiThesis}
M.~Shi, \emph{Local intensity and its dynamics in multi-name credit derivatives
  modeling}, Ph.D. thesis, Rutgers, The State University of New Jersey, January
  2010.

\bibitem{Stroock_Varadhan}
D.~W. Stroock and S.~R.~S. Varadhan, \emph{Multidimensional diffusion
  processes}, Springer, Berlin, 1979.

\bibitem{WangThesis}
J.~Wang, \emph{Semimartingales, {M}arkov processes and their applications in
  mathematical finance}, Ph.D. thesis, Rutgers, The State University of New
  Jersey, October 2010.

\bibitem{Yamada_1978}
T.~Yamada, \emph{Sur une construction des solutions d'{\'e}quations
  diff{\'e}rentielles stochastiques dans le cas non-lipschitzien},
  S{\'e}minaire de {P}robabilit{\'e}s, {XII} ({U}niv. {S}trasbourg,
  {S}trasbourg, 1976/1977), Lecture Notes in Math., vol. 649, Springer, Berlin,
  1978, pp.~114--131.

\end{thebibliography}
\bibliographystyle{amsplain}

\end{document}